\theoremstyle{plain}
\newtheorem{thm}{Theorem}[section]
\newtheorem{prop}{Proposition}[section]
\newtheorem{lem}[prop]{Lemma}
\newtheorem{assum}[prop]{Assumption}
\newtheorem{defi}[prop]{Definition}
\newtheorem{rmk}[prop]{Remark}
\newtheorem*{proposition*}{Proposition}
\numberwithin{equation}{section}
\newcommand {\p} {\partial}
\def\div{\text{div}}
\title[Inverse boundary problem for Mean Field Games]{On an inverse boundary problem for Mean Field Games }
\author[H. Liu]{Hongyu Liu}
\address{Department of Mathematics, City University of Hong Kong, Kowloon, Hong Kong SAR, China}
\email{hongyu.liuip@gmail.com, hongyliu@cityu.edu.hk}
\author[S. Zhang]{Shen Zhang}
\address{Department of Mathematics, City University of Hong Kong, Kowloon, Hong Kong SAR, China}
\email{szhang347-c@my.cityu.edu.hk}
\begin{document}
	\maketitle
	
	\begin{abstract}
	In this paper, we propose and study an inverse boundary problem for the mean field games (MFGs) governed by the first-order master equation in a bounded domain. We establish the unique identifiability result by showing that the running cost function within any given proper (state) space-time subdomain is uniquely determined by the MFG boundary data of this subdomain. There are several salient features of our study. The MFG system couples two nonlinear parabolic PDEs with one forward in time and the other one backward in time, and moreover there is a probability measure constraint. These make the inverse problem study new to the literature and highly challenging. We develop an effective and efficient scheme in tackling the inverse problem including the high-order variation in the probability space around a nontrivial distribution, and the construction of a novel class of CGO solutions to serve as the ``probing modes". Our study opens up a new field of research on inverse problems for MFGs.

%
	\end{abstract}

	\tableofcontents
	\section{Introduction}
\subsection{Problem setup and background}

	Mean Field Games (abbreviated as MFGs in what follows) are differential games involving non-atomic players, where one aims to study the behaviours of a large population of symmetric agents as the number of agents goes to infinity. They provide quantitative modelling of the macroscopic behaviours of the agents who wish to minimise a certain cost. The theory of MFGs was pioneered by Caines-Huang-Malhame \cite{HCM06,HCM071,HCM072,HCM073} and Lasry-Lions  \cite{LL06a, LL06b, LL07a, Lions} independently in 2006, and since then has received considerable attention and increasing studies int the literature. We refer to the books \cite{17} and the lecture notes \cite{CarPor} as well as the references cited therein for more related discussions. One of the main features of an MFG is that there is an adversarial regime and in this so-called monotone regime, the Nash equilibrium exists and is unique. The Nash equilibria of the MFGs can be characterized by the so-called master equation, which is a system of PDEs if the state space of the players is continuous. Next, we introduce the coupled PDE system which forms the forward model problem for our subsequent inverse problem study. 
	
Let $\mathbb{R}^n$ be the Euclidean space with $n\in\mathbb{N}$, and $\Omega'\subset\mathbb{R}^n$ be a bounded Lipschitz domain, which signifies the state space. Let $x\in\Omega'$ be the state variable and $t\in [0, \infty)$ be the time variable.  Let $\mathcal{P}$ stand for the set of Borel probability measures on $\mathbb{R}^n$, and  $\mathcal{P}(\Omega')$ stand for the set of Borel probability measures on $\Omega'$. Let $m\in\mathcal{P}(\Omega')$ denote the population distribution of the agents and $u(x, t):\Omega'\times [0, T]\mapsto \mathbb{R}$ denote the value function of each player. Here, $T\in\mathbb{R}_+$ signifies the terminal time in what follows. The MFG system for our study is introduced as follows:
\begin{equation}\label{main}
	\left\{
	\begin{array}{ll}
		\displaystyle{-\partial_t u(x,t) -\Delta u(t,x)+\frac{1}{2}|\nabla u(x,t)|^2-F(x, m(x,t))=0} &  {\rm{in}}\ Q',\medskip\\
		\displaystyle{\partial_tm(x,t)-\Delta m(x,t)-{\rm div} \big(m(x,t) \nabla u(x,t)\big)=0} & {\rm{in}}\ Q',\medskip\\
		\p_{\nu} u(x,t)=\p_{\nu} m(x,t)=0 & {\rm{on}}\ \Sigma',\medskip\\
		u(x,T)=G(x,m(x,T)),\ m(x,0)=m_0(x) & {\rm{in}}\  \Omega',\medskip
	\end{array}
	\right.
\end{equation}
where $\Delta$ and ${\rm div}$  are the Laplacian and divergent operators with respect to the $x$-variable, respectively; and 
$\Sigma':=\p\Omega'\times[0,T]$ , $Q':=\overline{\Omega'}\times[0,T]$ and $\nu$ is the exterior unit normal to $\partial\Omega'$. In \eqref{main}, $F:\Omega'\times\mathcal{P}(\Omega')\mapsto\mathbb{R} $ is the running cost function which signifies the interaction
between the agents and the population; $m_0$ is the initial population distribution
and $G:\Omega'\times\mathcal{P}(\Omega')\mapsto\mathbb{R}$ signifies the terminal cost. 

The MFG system \eqref{main} is also known as the first-order master equation which corresponds to the case that the volatility of the common noise among small players is vanishing \cite{CCP}. The master equation can be understood as an optimal nonlinear transport in the space of probability measures. The single player's value function $u$ satisfies the Hamilton-Jacobi-Bellman (HBJ) equation, namely the first equation in \eqref{main}, where the Hamiltonian is the canonical quadratic form $|\nabla u|^2/2$. The distribution law of the population $m$ is described by the Kolmogorov-Fokker-Planck (KFP) equation, namely the second equation in \eqref{main}.  When the  players have to control a process in a bounded domain $\Omega'\subset\mathbb{R}^n$ with reflexion on the boundary of $\Omega'$, it is constrained with the  Neumann boundary conditions. Many economic and financial models are described by this system; see for instance, the models in \cite{model}. It is also interesting to note that the HBJ equation is given backward in time, whereas the KFP equation is forward in time. In this paper, we focus on the case that $F$ and $G$ depend on $m$ locally. Due to this fact, we can define
\begin{equation}\label{eq:distr1}
\mathcal{O}:=\{ m:\Omega'\to [0,\infty) \ \ |\ \ \int_{\Omega'} m\, dx =1 \}.
\end{equation}
In other words, if $m\in \mathcal{O}$, then it is the density of a distribution in $\Omega'$. It can directly verified from \eqref{main} that if the initial distribution $m_0\in\mathcal{O}$, then $m(\cdot; t)\in\mathcal{O}$ for any subsequent time $t$. It is clear that by scaling, the total population $1$ in \eqref{eq:distr1} can be replaced by any positive number. 

The running cost $F$ is one of the key parameters (functions) in the MFG system. However, in practice the running cost is often  unknown or only partially known for the agents. This motivates us to consider the inverse problem of determining the running cost function $F$ by indirect measurement/knowledge of the MFG system that results from optimal actions. To that end, we next introduce the measurement/observation dataset for our inverse problem study. Let $\Omega$ be a given closed proper subdomain of $\Omega'$ with a smooth boundary $\partial\Omega$ and $Q:=\Omega\times [0, T]$, $\Sigma:=\partial\Omega\times[0,T].$
We define 
\begin{equation}\label{eq:meop0}
\mathcal{N}_F(m_0):=\Big( u(x, t), m(x, t)\Big)_{(x, t)\in \partial Q}, 
\end{equation}
where
$(u, m)$ is the (unique) pair of solutions to the MFG system \eqref{main} associated with the initial population distribution $m(x, 0)=m_0(x)$. Noting that $\partial Q=\Omega\times \{0, T\}\cup \Sigma$, we clearly have that
\begin{equation}\label{eq:meop1}
\mathcal{N}_F(m_0):=\Big(\big(u(x,0),m(x,T)\big)\Big|_{x\in\Omega}, \big(u(x,t), m(x,t)\big)\Big|_{(x,t)\in\Sigma}\Big). 
\end{equation}
 $\mathcal{N}(m_0)$ encodes the (state) space-time boundary data of $u$ and $m$ in the subdomain $\Omega\times[0, T]$ associated with a given initial population distribution $m_0$. The inverse problem that we aim to investigate can be formulated as follows:
\begin{equation}\label{eq:ip1}
\mathcal{N}_{F}(m_0)\rightarrow F\quad \mbox{for all}\ \ m_0\in\mathcal{H}\subset\mathcal{O}, 
\end{equation}
where $\mathcal{H}$ is a proper subset and it shall be described in more details in what follows. Here, we emphasise that in our study, we have no more restriction on $\Omega$ other than described above which makes our inverse problem study more realistic from a practical point of view. To be more specific, one can think of measuring/observing the space-time boundary data of $u$ and $m$ and from which to determine the running cost function $F$ over the space-time domain $Q:={\Omega}\times [0, T]$. We shall prove in a general setup that one can indeed achieve this goal. In fact, we shall establish sufficient conditions to guarantee that unique identifiability for \eqref{eq:ip1}, which is of primary importance in the theory of inverse problems and can be stated as follows: 
\begin{equation}\label{eq:ui1}
F_1=F_2\quad\mbox{if and only if}\quad \mathcal{N}_{F_1}(m_0)=\mathcal{N}_{F_2}(m_0)\ \ \mbox{for all}\ m_0\in\mathcal{H}, 
\end{equation}
where $F_j\in\mathcal{A}$, $j=1,2$, with $\mathcal{A}$ signifying an a-priori class that shall be detailed in what follows.

	\subsection{Technical developments and discussion }
	
As also discussed earlier, the forward problem of MFG systems has been extensively and intensively studied in the literature in recent years, and we refer to \cite{Car,high_order_der,Cira,FerGomTad} for more results on the well-posedness of the forward problems that are related to the MFG system \eqref{main} of the current study. Nevertheless, we would like to point out that for our inverse problem study, we need to establish new results for the forward MFG system \eqref{main}, especially on the regularity of the solutions and the high-order variation of the system around a fixed pair of solutions. On the other hand, the inverse problems for MFGs are much less studied in the literature. To our best knowledge, there are only several numerical results available in \cite{CFLNO,DOY} and in \cite{LMZ} unique identifiability results were established for an MFG inverse problem, which, though related, is of a different formulation from the one in the present article. Among several different aspects, a major technical difference between the studies in \cite{LMZ} and the current article is that the arguments in \cite{LMZ} critically depend on a high-order linearisation technique around a pair of trivial solutions $(u_0, m_0)\equiv (0, 0)$. This makes the study in \cite{LMZ} is unobjectionably unrealistic from a physical point of view, though mathematically rigorous. For the current study of \eqref{eq:ip1}--\eqref{eq:ui1}, due to the probability measure requirement on $m$, namely the constraint in \eqref{eq:distr1}. This technical constraint makes the corresponding inverse problem radically more challenging. In fact, in \cite{LMZ}, the high-order linearisation process around the trivial solutions can decouple the two PDEs of the MFG system in a certain sense. However, in the current one, we have to develop a new high-order linearisation process around a nontrivial pair of solutions, and the derived linearised systems are still coupled PDE systems. It is emphasised that even the treatments of the intermediate inverse problems for those linearised systems are technically novel to the literature. Furthermore, the constraint \eqref{eq:distr1} as well as the coupling of the MFG system significantly restrict the range of ``probing inputs", i.e. the initial distributions $m_0$, which in turn restrict the generation of ``many" probing modes $u$ and $m$ for detecting the unknown $F$. For general PDE inverse problems, it is generically held that the more dense of the set of PDE solutions (which play the role of the probing modes) can be generated, the more easily one can recover the unknowns. To overcome those challenges, we devise the inverse problem of recovering $F$ over $Q$ by measuring $u$ and $m$ on $\partial Q$ and in doing so, the free region $Q'\backslash Q$ enables us to construct a new and delicate family of the so-called CGO (Complex-Geometric-Optics) solutions to fulfil our inverse problem purpose. Moreover, our analysis in this aspect accompanies several novel and subtle estimates. We believe those technical developments shall be useful for tackling MFG inverse problems in different setups, and we chose to investigate these extensions in our future works. 

Finally, we would like to briefly mention that the mathematical study of inverse problems associated with nonlinear PDEs have received considerable interest in the literature recently; see e.g. \cite{KLU,LLLS,LLLS21,LLLZ} and the references cited therein. Even in such a context, there are several salient features of our study that are worth highlighting. First, the MFG system \eqref{main} couples the two nonlinear PDEs in a backward-forward manner with respect to the time. Second, there is a probability measure constraint \eqref{eq:distr1} which restricts the generation of ``probing modes". We construct a novel class of CGO solutions for the inverse problem study. Third, we develop a high-order linearisation method around a nontrivial pair of solutions. The resulting linearised systems are still coupled in nature. We believe the mathematical strategies developed in this article can find more applications in tackling other inverse problems associated with coupled nonlinear PDEs in different contexts. 

The rest of the paper is organized as follows. In Section 2, we fix some notations and introduce several auxiliary results as well as state the main result of the inverse problem. Section 3 is devoted to the study of the forward problem, and Sections~4 and 5 are devoted to the proof of the main result. 

%
%
%
%
%

	\section{Preliminaries and statement of main results}
	
	\subsection{Notations and Basic Setting}\label{notation}
	
For $k\in\mathbb{N}$ and $0<\alpha<1$, the H\"older space $C^{k+\alpha}(\overline{\Omega'})$ is defined as the subspace of $C^{k}(\overline{\Omega'})$ such that $\phi\in C^{k+\alpha}(\overline{\Omega'})$ if and only if $D^l\phi$ exist and are H\"older continuous with exponent $\alpha$ for all $l=(l_1,l_2,\ldots,l_n)\in \mathbb{N}^n$ with $|l|\leq k$, where $D^l:=\partial_{x_1}^{l_1}\partial_{x_2}^{l_2}\cdots\partial_{x_n}^{l_n}$ for $x=(x_1, x_2,\ldots, x_n)$. The norm is defined as
\begin{equation}
	\|\phi\|_{C^{k+\alpha}(\overline{\Omega'}) }:=\sum_{|l|\leq k}\|D^l\phi\|_{\infty}+\sum_{|l|=k}\sup_{x\neq y}\frac{|D^l\phi(x)-D^l\phi(y)|}{|x-y|^{\alpha}}.
\end{equation}
	If the function $\phi$ depends on both the time and space variables, we define $\phi\in C^{k+\alpha, \frac{k+\alpha}{2}}(Q')$ if $D^lD^{j}_t\phi$ exist and are are H\"older continuous with exponent $\alpha$ in $x$ and $\frac{k+\alpha}{2} $ in $t$ for all  $l\in \mathbb{N}^n$, $j\in\mathbb{N}$ with $|l|+2j\leq k.$ The norm is defined as
	\begin{equation}
		\begin{aligned}
			\|\phi\|_{ C^{k+\alpha, \frac{k+\alpha}{2}}(Q')}:&=\sum_{|l|+2j\leq k}\|D^lD^j_t\phi\|_{\infty}+\sum_{|l|+2j= k}\sup_{t,x\neq y}\frac{|\phi(x,t)-\phi(y,t)|}{|x-y|^{\alpha}}\\
			&+\sum_{|l|+2j= k}\sup_{t\neq t',x} \frac{|\phi(x,t)-\phi(x,t')|}{|t-t'|^{\alpha/2}}.
		\end{aligned}
	\end{equation}

	Moreover, we denote by $H^s(\Omega')$, $H^r(\Sigma')$, $H^s(0,T;H^r(\Omega'))$ the standard Sobolev spaces for $s,r\in\mathbb{R}.$
	
	Since we need to study the linearized system of $\eqref{main}$ for our inverse problem study, we next define the variation of a function defined on $\mathcal{P}(\Omega') $ (cf. \cite{num_boundary}). Recall that  $\mathcal{P}(\Omega')$ denotes the set of probability measures on $\Omega'$ and let $U$ be a real function defined on $\mathcal{P}(\Omega') $.
	  
	\begin{defi}\label{def_der_1}
		 Let $U :\mathcal{P}(\Omega')\to\mathbb{R}$. We say that $U$ is of class $C^1$ if there exists a continuous map $K:  \mathcal{P}(\Omega')\times \Omega'\to\mathbb{R}$ such that, for all $m_1,m_2\in\mathcal{P}(\Omega') $ we have
		 \begin{equation}\label{derivation}
		 \lim\limits_{s\to 0^+}\frac{U\big(m_1+s(m_2-m_1)-U(m_1)\big)}{s}=\int_{\Omega'} K(m_1,x)d(m_2-m_1)(x).
		 \end{equation}
	\end{defi}
    Note that the definition of $K$ is up to additive constants. We deinfe  the derivative
    $\dfrac{\delta U}{\delta m}$ as the unique map $K$ satisfying $\eqref{derivation}$ and 
    \begin{equation}
    	\int_{\Omega'} K(m,x) dm(x)=0.
    \end{equation}
Similarly, we can define higher order derivatives of $U$, and we refer to \cite{high_order_der} for more related discussion.
Finally, we  define the Wasserstein distance between $m_1$ and $m_2$ in $\mathcal{P}(\Omega')$, which shall be needed in studying the regularity of the derivative $\dfrac{\delta U}{\delta m}$.
\begin{defi}\label{W_distance}
	Let $m_1,m_2$ be two Borel probability measures on $\Omega'$. Define
	\begin{equation}
		d_1(m_1,m_2):=\sup_{Lip(\psi)\leq 1}\int_{\Omega'}\psi(x)d(m_1-m_2)(x),
	\end{equation}
	where $Lip(\psi)$ denotes the Lipschitz constant for a Lipschitz function, i.e., 
	\begin{equation}\label{eq:Lip1}
	Lip(\psi)=\sup_{x, y\in\Omega', x\neq y}\frac{|\psi(x)-\psi(y)|}{|x-y|}. 
	\end{equation}
\end{defi}
\begin{rmk}
	In Definitions~\ref{def_der_1} and \ref{W_distance}, $m$ ( i.e. $m_1$ or $m_2$ ) is viewed as a distribution. However, in other parts of the paper, we use $m$ to denote the density of a distribution such as in the MFG system \eqref{main}. 
\end{rmk}	
	\subsection{Well-posedness conditions and admissible class}\label{assump}
	
Throughout the rest of the paper and without loss of generality, we shall always assume that $|\Omega'|=1$ in order to simplify the exposition. Next, we introduce several assumptions which are generally needed in guaranteeing the well-posedness of the forward MFG system \eqref{main}; see \cite{num_boundary} and \cite{Cardaliaguet} for more related discussions on this aspect. 

	\begin{assum}\label{hypo}

			(I) (monotonicity property) For any $m_1,m_2\in\mathcal{P}(\Omega')$, we have 
		\begin{equation}\label{mono1}
			\int_{\Omega'} \big( F(x,m_1)-F(x,m_2)d(m_1-m_2)(x)\big)\geq 0,
		\end{equation}
		and 
		\begin{equation}
			\int_{\Omega'} \big( G(x,m_1)-G(x,m_2)d(m_1-m_2)(x)\big)\geq 0,
		\end{equation}
				for all $m_1,m_2\in \mathcal{P}(\Omega').$

		(II) (regularity conditions) Assume that
		\begin{equation}
			\sup_{m\in\mathcal{P}(\Omega')} \Big(\left\|F(\cdot,m)\right\|_{\alpha}+ \left\|\frac{\delta F}{\delta m}(\cdot,m,\cdot)\right\|_{\alpha,2+\alpha}\Big)+Lip (\frac{\delta F}{\delta m})< \infty,
		\end{equation}
		\begin{equation}
			\sup_{m\in\mathcal{P}(\Omega')} \Big(\left\|G(\cdot,m)\right\|_{2+\alpha}+ \left\|\frac{\delta G}{\delta m}(\cdot,m,\cdot)\right\|_{2+\alpha,2+\alpha}\Big)+ Lip (\frac{\delta G}{\delta m})< \infty,
		\end{equation}
		where
		\begin{equation}\label{eq:Lip2}
		\begin{split}
				Lip (\frac{\delta F}{\delta m}):=& \sup_{m_1\neq m_2}\Big (d_1(m_1-m_2)^{-1} \left\|\frac{\delta F}{\delta m_1}(\cdot,m,\cdot)- \frac{\delta F}{\delta m_1}(\cdot,m_2,\cdot)\right\|_{\alpha,1+\alpha}\Big)\\
		Lip (\frac{\delta G}{\delta m}):=& \sup_{m_1\neq m_2}\Big (d_1(m_1-m_2)^{-1} \left\|\frac{\delta G}{\delta m_1}(\cdot,m,\cdot)- \frac{\delta G}{\delta m_1}(\cdot,m_2,\cdot)\right\|_{2+\alpha,2+\alpha}\Big)
		\end{split}
		\end{equation}
			for all $m_1,m_2\in \mathcal{P}(\Omega').$ Here, we note the definitions of $Lip$ in \eqref{eq:Lip1} and \eqref{eq:Lip2} are slightly different, which should be clear from the context.

		(III)(compatibility conditions)
		\begin{equation}
			\begin{split}
			\left< D_y\frac{\delta G}{\delta m}(x,m,y),\nu(y)\right>&=0,\\
	        \left< D_y\frac{\delta F}{\delta m}(x,m,y),\nu(y)\right>&=0,\\
		   \left< D_x G(m,x),\nu(x)\right>&=0,
			\end{split}
		\end{equation}
		for all $m\in \mathcal{P}(\Omega').$
		\end{assum}
		
			Assumption~\ref{hypo} is mainly needed to guarantee the well-posedness of the forward MFG system \eqref{main}, and it shall be imposed throughout the rest of the paper.

Next, we introduce the admissible conditions on $F$ and $G$, which shall be mainly needed for our subsequent inverse problem study of \eqref{eq:ip1} and \eqref{eq:ui1}. 

\begin{defi}\label{Admissible class2}
		We say $U(x,z):\mathbb{R}^n\times\mathbb{C}\to\mathbb{C}$ is admissible, denoted by $U\in\mathcal{A}$, if it satisfies the following conditions:
		\begin{enumerate}
			\item[(i)] The map $z\mapsto U(\cdot,z)$ is holomorphic with value in $C^{\alpha}(\mathbb{R}^n)$ for some $\alpha\in(0,1)$.
			\item[(ii)] $U(x,1)=0$ for all $x\in\mathbb{R}^n$. Here we recall that we assume $|\Omega'|=1.$
			\item[(iii)] There exists $a_1\in\mathbb{R}$ such that $ U^{(1)}(x)>a_1>0$. 
		\end{enumerate} 	
	
		Clearly, if (1) and (2) are fulfilled, then $U$ can be expanded into a power series as follows:
		\begin{equation}\label{eq:G}
			U(x,z)=\sum_{k=1}^{\infty} U^{(k)}(x)\frac{(z-1)^k}{k!},
		\end{equation}
		where $ U^{(k)}(x)=\frac{\p^kU}{\p z^k}(x,1)\in C^{\alpha}(\mathbb{R}^n).$
	\end{defi}
	To make this definition clear, we have the following remark.
	\begin{rmk}
For the MFG system \eqref{main}, if we assume $F\in\mathcal{A}$, it means that $F$ possesses the power series expansion \eqref{eq:G} with the complex variable $z$ restricted on the real line. For this reason, we always assume in the series expansions \eqref{eq:G} that the coefficient functions $U^{(k)}$ are real-valued since $F$ is always real valued in our study. Moreover, we would like to emphasise that the admissibility conditions in Definition~\ref{Admissible class2} are compatible to those well-posedness conditions in Assumption~\ref{hypo}. In fact, there are overlaps among them and we shall not explore in details here since it is not the focus of our study. As a simple illustration, one can take $F(x, z)=e^z-1$ or $z-1$ and directly verify that all conditions can be fulfilled.  
	\end{rmk}
	
%


Next, we introduce the following function space, 
\begin{equation}\label{eq:fsh}
	H_{\pm}(Q):=\{u\in \mathcal{D}'(Q) \ \ |\ \ u\in L^2(Q) \text{ and } (\pm\p_t-\Delta)u\in L^2(Q) \},
\end{equation}
endowed with the norm
$$\|u\|^2_{H_{\pm}(Q)}:=\|u\|^2_{L^2(Q)}+\|(\pm\p_t-\Delta)u\|^2_{L^2(Q)}.$$ This function shall be needed in our analysis in Section~4 as well as the following admissibility condition for the terminal cost $G$ in \eqref{main}. 

\begin{defi}\label{improve_regular}
	We say $G$ is an action operator at $m=1$ if for any $\rho(x,t)\in H_{\pm}(Q)$, we have
	$$\dfrac{\delta G}{ \delta m}(x,1)(\rho(x,T)):=\left<\dfrac{\delta G}{ \delta m}(x,1,\cdot),\rho(x,T)\right>_{L^2}\in L^2(\Omega),$$
	and
	$$ \left\|  \dfrac{\delta G}{ \delta m}(x,1)(\rho(x,T)) \right\|_{L^2(\Omega)}\leq C \|\rho\|_{ L^2(Q) }.$$
	 We say $G\in\mathcal{B}$ if it is an action operator at $m=1$ and $ G(x,1)=B$ for some constant $B\in\mathbb{R}.$
\end{defi}

   Further discussions about $H_{\pm}$ shall be provided in section $\ref{sec_pre_est}$. In particular, we shall show that $\rho(x,T)$ is well-defined. 

	\begin{rmk}\label{rem:1}
	Similar to the admissibility condition on $F$, we would like to remark that the admissibility condition on $G$ in Definition~\ref{improve_regular} is also generic ones. In fact, in many practical setups, the terminal cost functions fulfil that the belong to the class $\mathcal{B}$; see \cite{high_order_der} for related examples. Here, we mention a typical one by letting
		$$
		G(x,m)=\int_{\Omega'}\psi(z,\rho*m(z))\rho(x-z)\ dz,
		$$
		where $*$ denotes the  convolutional operation and $\psi: \mathbb{R}^2\mapsto\mathbb{R}$ is a smooth map
		which is nondecreasing with respect to the second variable and $\rho$ is a smooth, even function with a
		compact support in $\Omega'$. Clearly, $G$ is an action operator at any point as long as it is in the form of convolution.
		\end{rmk}
		
	\begin{rmk}
		Note that if $F(x,1)=0$ and $G(x,1)=B$  (as in Definition $\ref{improve_regular}$), then
		 $(u,m)=(B,1)$ is a solution of the MFG system \eqref{main}. In this case, the initial distribution $m(x, 0)=1$. This is a common nature of MFG system that the uniform distribution is a stable state.
	\end{rmk}

\subsection{Main unique identifiability result}

	We are in a position to state the main result for the inverse problem \eqref{eq:ip1}-\eqref{eq:ui1}, which shows in a generic scenario that one can recover  the running cost $F$ from the measurement map $\mathcal{N}_F$. 
	
	\begin{thm}\label{der F}
		Let Assumption $\ref{hypo}$ holds for $ 0<\alpha<1$. Assume that $F_j \in\mathcal{A}$ ($j=1,2$) and $G\in\mathcal{B}$. Let $\mathcal{N}_{F_j}$ be the measurement map associated to
		the following system:
		\begin{equation}\label{eq:mfg1}
			\begin{cases}
				-\p_tu(x,t)-\Delta u(x,t)+\frac 1 2 {|\nabla u(x,t)|^2}= F_j(x,m(x,t)),& \text{ in }  Q',\medskip\\
				\p_t m(x,t)-\Delta m(x,t)-{\rm div} (m(x,t) \nabla u(x,t))=0,&\text{ in } Q',\medskip\\
				\p_{\nu} u(x,t)=\p_{\nu}m(x,t)=0      &\text{ on } \Sigma',\medskip\\
				u(x,T)=G(x,m(x,T)), & \text{ in } \Omega',\medskip\\
				m(x,0)=m_0(x), & \text{ in } \Omega'.\\
			\end{cases}  		
		\end{equation}	
		If for any $m_0\in  C^{2+\alpha}(\Omega') \cap \mathcal{O}$, where $\mathcal{O}$ is defined in \eqref{eq:distr1}, one has 
		$$\mathcal{N}_{F_1}(m_0)=\mathcal{N}_{F_2}(m_0),$$    then it holds that 
		$$F_1(x,z)=F_2(x,z)\ \text{  in  }\ \Omega\times \mathbb{R}.$$ 
	\end{thm}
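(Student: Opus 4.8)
The plan is to recover $F_1 = F_2$ by a high-order linearisation argument performed \emph{around the nontrivial stable solution} $(u,m) \equiv (B,1)$, combined with the construction of CGO-type solutions supported in the free region $Q' \setminus Q$. First I would set up a one-parameter family of initial data $m_0^\varepsilon = 1 + \sum_{k=1}^N \varepsilon^k f_k$ with $f_k \in C^{2+\alpha}(\Omega')$ chosen so that $\int_{\Omega'} f_k\, dx = 0$ (so $m_0^\varepsilon \in \mathcal{O}$ for small $\varepsilon$), and expand the resulting solution pair $(u^\varepsilon, m^\varepsilon)$ of \eqref{eq:mfg1} in powers of $\varepsilon$ about $(B,1)$. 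The well-posedness and regularity theory for the forward problem (Section~3) is what makes this expansion legitimate: one gets $u^\varepsilon = B + \sum \varepsilon^k u_k$, $m^\varepsilon = 1 + \sum \varepsilon^k m_k$, where the $(u_k, m_k)$ solve a recursively-defined hierarchy of \emph{linear} but still coupled parabolic systems (HJB-type backward equation for $u_k$, KFP-type forward equation for $m_k$), with source terms built from $F_1^{(j)}$ (respectively $F_2^{(j)}$), lower-order $u_i, m_i$, and $\frac{\delta G}{\delta m}(x,1)$. The hypothesis $\mathcal{N}_{F_1}(m_0^\varepsilon) = \mathcal{N}_{F_2}(m_0^\varepsilon)$ for all admissible $m_0$, differentiated in $\varepsilon$, forces the boundary data on $\partial Q$ of $u_k^{(1)} - u_k^{(2)}$ and $m_k^{(1)} - m_k^{(2)}$ to vanish at every order $k$.

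Next I would run an induction on $k$. At the first order, $(u_1, m_1)$ solves a coupled linear system with no dependence on $F_j$ except through $F_j^{(1)}(x) m_1$ in the $u_1$-equation; since $m_1$ is driven only by the KFP equation (which does not see $F$ at this order), $m_1^{(1)} = m_1^{(2)} =: m_1$ and the difference $w_1 := u_1^{(1)} - u_1^{(2)}$ satisfies $(-\partial_t - \Delta) w_1 = (F_1^{(1)} - F_2^{(1)})(x)\, m_1$ in $Q'$ with $w_1 = 0$ on $\partial Q$ and the appropriate terminal/Neumann conditions. The key integral identity is then obtained by pairing this equation against a solution $\phi$ of the adjoint (forward heat) equation $(\partial_t - \Delta)\phi = 0$ in $Q'$: integrating by parts over $Q$ and using the vanishing Cauchy data of $w_1$ on $\partial Q$ kills all boundary terms, yielding $\int_Q (F_1^{(1)} - F_2^{(1)})(x)\, m_1(x,t)\, \phi(x,t)\, dx\, dt = 0$. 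Here is where the CGO construction enters: I would choose $m_1$ (via the free choice of $f_1$ and the free region $Q' \setminus Q$) and $\phi$ to be CGO solutions of the heat and backward-heat equations whose product $m_1 \phi$ has the form $e^{i\xi \cdot x} \cdot (\text{correction})$, so that letting the complex frequency parameter grow and integrating in $t$ recovers, by a density/Fourier argument, that $F_1^{(1)} \equiv F_2^{(1)}$ on $\Omega$.

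For the induction step, assuming $F_1^{(j)} = F_2^{(j)}$ on $\Omega$ for $j < k$, the order-$k$ systems for $(u_1^{(1)},\dots)$ and $(u^{(2)},\dots)$ have identical source terms \emph{except} for the top term $F_1^{(k)}(x) m_1^k$ (the pure power coming from the $k$-fold variation in the $z$-slot) — one must verify, using the inductive hypothesis and that all lower $(u_i, m_i)$ agree, that every other term matches. Then $w_k := u_k^{(1)} - u_k^{(2)}$ again satisfies a backward heat equation with right-hand side $(F_1^{(k)} - F_2^{(k)})(x)\, m_1^k$ and zero Cauchy data on $\partial Q$, and the same CGO/pairing argument gives $F_1^{(k)} = F_2^{(k)}$ on $\Omega$. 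Since $F_j \in \mathcal{A}$ is determined by its Taylor coefficients $\{F_j^{(k)}\}$ via \eqref{eq:G} and $F_j(x,1) = 0$, concluding $F_1 = F_2$ on $\Omega \times \mathbb{R}$ follows by holomorphicity/analytic continuation in $z$. I expect the \textbf{main obstacle} to be the induction step's bookkeeping combined with the CGO construction: because the linearised systems remain \emph{coupled} (unlike the decoupled situation around the trivial solution in \cite{LMZ}), one cannot simply prescribe $m_1$ freely — it is the solution of a KFP equation forced by $u_1$, which is in turn forced by $F^{(1)}$; so the CGO solutions must be built for the coupled operator and shown to have the right asymptotics and boundary behaviour, and one must control the remainder terms in the $\varepsilon$-expansion in a norm strong enough (using the $H_\pm(Q)$ spaces and the action-operator property of $G \in \mathcal{B}$) that the trace on $\partial Q$ makes sense and the limit can be passed. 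The estimates needed to justify differentiating $\mathcal{N}_{F_j}$ in $\varepsilon$ and to control these remainders are exactly the "novel and subtle estimates" flagged in the introduction.
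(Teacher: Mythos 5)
Your overall architecture---high-order linearisation around the nontrivial pair $(B,1)$, an integral identity obtained by pairing against adjoint solutions, CGO probing modes, and induction on the Taylor coefficients---matches the paper's. But there is a genuine gap at the very first order, and it is exactly the point the paper is organised around. You assert $m_1^{(1)}=m_1^{(2)}$ because ``the KFP equation does not see $F$ at this order.'' This is false here: linearising ${\rm div}(m\nabla u)$ around $m\equiv 1$ produces $\Delta u^{(1)}$, so the first-order KFP equation reads $\p_t m^{(1)}-\Delta m^{(1)}-\Delta u^{(1)}=0$, and $m^{(1)}$ is forced by $u^{(1)}$, which is in turn forced by $F_j^{(1)}m^{(1)}$. (The decoupling you are implicitly invoking occurs only when one linearises around the trivial background $(0,0)$ as in \cite{LMZ}.) Hence $m_1^{(1)}\neq m_1^{(2)}$ in the interior---the data equality gives agreement only on $\p Q$---and $w_1:=u_1^{(1)}-u_2^{(1)}$ does \emph{not} solve a scalar backward heat equation with source $(F_1^{(1)}-F_2^{(1)})m_1$; with $\ol m:=m_1^{(1)}-m_2^{(1)}$ it solves the still-coupled system $-\p_t w_1-\Delta w_1-F_1^{(1)}\ol m=(F_1^{(1)}-F_2^{(1)})m_2^{(1)}$, $\p_t\ol m-\Delta \ol m-\Delta w_1=0$. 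Pairing against a scalar caloric function $\phi$ then leaves uncontrolled interior terms in $\ol m$, so your key identity $\int_Q(F_1^{(1)}-F_2^{(1)})m_1\phi\,dxdt=0$ does not follow as stated.

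The repair is the actual content of the paper: one pairs the coupled difference system against a solution $(v,\rho)$ of the coupled adjoint system $v_t-\Delta v=F_1^{(1)}\rho$, $-\rho_t-\Delta\rho-\Delta v=0$; two integrations by parts (Lemma \ref{key}) make all the $\ol u$- and $\ol m$-terms cancel and leave precisely $\int_Q(F_1^{(1)}-F_2^{(1)})m_2^{(1)}\rho\,dxdt=0$. The probing modes $m_2^{(1)}$ and $\rho$ must accordingly be CGO solutions of the \emph{coupled} forward and adjoint systems, which the paper constructs by a contraction argument in weighted $L^2$ spaces resting on the Carleman estimates of Lemmas \ref{Carleman 1} and \ref{Carleman 2} (Theorems \ref{construct CGO 1} and \ref{CGO2}). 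You flag exactly this as your ``main obstacle'' but leave it unresolved; it is the core of the proof rather than bookkeeping, and your concrete first-order argument contradicts your own caveat. Two smaller omissions: the Runge-type approximation (Lemma \ref{Runge approximation}) needed to reconcile the $L^2$-based CGO solutions with the H\"older-regular linearised solutions for which the integral identity is initially known; and, at second and higher orders, the CGO step only yields $(F_1^{(2)}-F_2^{(2)})m^{(1)}=0$, so one still needs the argument that $m^{(1)}$ with positive initial data cannot vanish on any open subset of $Q$ in order to conclude.
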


	
	\section{Auxiliary results on the forward problem}\label{section wp}
	
	In this section, we derive several auxiliary results on the forward problem of the MFG system \eqref{main}. One of the key results is the infinite differentiability of the system with respect to small variations around (the density of) a uniform
	distribution ($m_0(x)$). First, we show the existence result of an auxiliary system. 
	
	\begin{lem}\label{linear app unique}
		Assume that   $F^{(1)}\in C^{\alpha}(\Omega')$.
	For  any  $g,\tilde g\in C^{2+\alpha}(\overline\Omega')$,  
	and  $h,\tilde{h}\in C^{\alpha,\alpha/2}(\overline Q')$ with  the   compatibility conditions:
	\begin{align}\label{c-systems}
		\p_{\nu}\tilde g(x)= 
		\p_{\nu} g(x)=0,
	\end{align}
 the following system
		\begin{equation}\label{surjective}
			\begin{cases}
				-u_t-\Delta u-F^{(1)}(x)m=h & \text{ in } Q',\medskip\\
				m_t-\Delta m-\Delta u=\tilde{h}  & \text{ in } Q',\medskip\\
				\p_{\nu}u(x,t)=\p_{\nu}m(x,t)=0     & \text{ on } \Sigma',\medskip\\
				\displaystyle{u(x,T)=\frac{\delta G}{\delta m}(x,1)(m(x,T))+g}     & \text{ in } \Omega',\medskip\\
				m(x,0)=\tilde{g} & \text{ in } \Omega'.\\
			\end{cases}
		\end{equation}
	admits a pair of solutions $(u,m)\in [ C^{2+\alpha,1+\alpha/2}(\overline Q')]^2$. 
	\end{lem}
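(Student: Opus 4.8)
The plan is to recast the coupled backward--forward parabolic system \eqref{surjective} as a fixed point problem that can be solved by Schauder theory, exploiting the fact that the coupling term $\Delta u$ in the second equation has the same order as the leading term. First I would set up the natural iteration: given $m^{(k)}\in C^{2+\alpha,1+\alpha/2}(\overline{Q'})$, solve the (now decoupled) backward HJB-type equation
\begin{equation*}
	-u^{(k+1)}_t-\Delta u^{(k+1)}=h+F^{(1)}(x)m^{(k)}\ \text{ in }Q',\quad \p_\nu u^{(k+1)}=0\ \text{ on }\Sigma',\quad u^{(k+1)}(x,T)=\tfrac{\delta G}{\delta m}(x,1)(m^{(k)}(x,T))+g,
\end{equation*}
and then solve the forward KFP-type equation
\begin{equation*}
	m^{(k+1)}_t-\Delta m^{(k+1)}=\tilde h+\Delta u^{(k+1)}\ \text{ in }Q',\quad \p_\nu m^{(k+1)}=0\ \text{ on }\Sigma',\quad m^{(k+1)}(x,0)=\tilde g.
\end{equation*}
Each step is a standard linear parabolic Neumann problem to which the parabolic Schauder estimates (see \cite{Car,Cardaliaguet}) apply, provided the compatibility conditions \eqref{c-systems} together with the hypotheses $F^{(1)}\in C^{\alpha}(\Omega')$ and $G\in\mathcal{B}$ (in particular the boundary compatibility in Assumption~\ref{hypo}(III) for $\tfrac{\delta G}{\delta m}$) guarantee the corner-compatibility needed for $C^{2+\alpha,1+\alpha/2}$ regularity up to $\{t=0\}$ and $\{t=T\}$.

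The key analytic point is to close the estimates. Applying Schauder to the $u$-equation gives $\|u^{(k+1)}\|_{C^{2+\alpha,1+\alpha/2}}\le C(\|h\|_{C^{\alpha,\alpha/2}}+\|F^{(1)}\|_{C^\alpha}\|m^{(k)}\|_{C^{\alpha,\alpha/2}}+\|g\|_{C^{2+\alpha}}+\|\tfrac{\delta G}{\delta m}(x,1)(m^{(k)}(\cdot,T))\|_{C^{2+\alpha}})$, where I would use the regularity of $\tfrac{\delta G}{\delta m}$ in Assumption~\ref{hypo}(II) to bound the last term by $C\|m^{(k)}(\cdot,T)\|_{C^{2+\alpha}}\le C\|m^{(k)}\|_{C^{2+\alpha,1+\alpha/2}}$. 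Then Schauder on the $m$-equation gives $\|m^{(k+1)}\|_{C^{2+\alpha,1+\alpha/2}}\le C(\|\tilde h\|_{C^{\alpha,\alpha/2}}+\|\Delta u^{(k+1)}\|_{C^{\alpha,\alpha/2}}+\|\tilde g\|_{C^{2+\alpha}})\le C(\cdots)+C\|u^{(k+1)}\|_{C^{2+\alpha,1+\alpha/2}}$. Naively composing these loses a derivative-order budget only if the constants are not controlled, but the actual gain is that $\Delta u^{(k+1)}$ sits at order $\alpha$ in parabolic H\"older scale while $u^{(k+1)}$ was estimated at order $2+\alpha$, so the composition is bounded; the real danger is that the product of Schauder constants may exceed $1$, which would prevent contraction. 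To handle this I would run the argument on a short time interval $[T-\tau,T]$ (respectively rescale), where the Schauder constants for the inhomogeneous-datum-free parts can be made small, obtain a unique solution there, and then iterate/patch over $[0,T]$ using that the equations are linear so no blow-up of norms occurs — alternatively, since the system is linear, one can bypass contraction entirely and instead establish existence via a Galerkin or continuity-method argument on the closed subspace of $[C^{2+\alpha,1+\alpha/2}(\overline{Q'})]^2$ cut out by the boundary and compatibility conditions, using the a priori estimate above (which does hold globally once uniqueness is known) together with the Leray--Schauder fixed point theorem applied to the compact solution operator $m^{(k)}\mapsto m^{(k+1)}$ (compactness coming from the gain $C^{2+\alpha}\hookrightarrow\hookrightarrow C^{2+\alpha'}$ for $\alpha'<\alpha$, or from the extra time regularity).

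For uniqueness of the fixed point, which also feeds the a priori bound, I would test the difference of two solutions: subtracting, $(u,m)$ solves the homogeneous version of \eqref{surjective} with $h=\tilde h=g=\tilde g=0$. Using the monotonicity structure inherited from Assumption~\ref{hypo}(I) for $F^{(1)}$ (note $F\in\mathcal{A}$ with $F^{(1)}>a_1>0$ is exactly the linearized monotonicity) and the action-operator property of $G$ in Definition~\ref{improve_regular}, one performs the standard Lasry--Lions duality computation: multiply the first equation by $m$, the second by $u$, integrate over $Q'$, integrate by parts in $x$ (the Neumann conditions kill boundary terms) and in $t$, and combine. The terminal and initial conditions together with $\langle \tfrac{\delta G}{\delta m}(x,1)(m(\cdot,T)),m(\cdot,T)\rangle\ge 0$ and $\int_{Q'}F^{(1)}|m|^2\ge a_1\int_{Q'}|m|^2$ force $m\equiv 0$, hence $u\equiv 0$ from its equation. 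The main obstacle I anticipate is precisely reconciling the apparent derivative loss in the $u\leftrightarrow m$ coupling with the Schauder scheme: one must be careful that $\Delta u$ really is only an order-$\alpha$ source for the $m$-equation rather than forcing one to estimate $u$ in $C^{4+\alpha}$, and that the corner compatibility conditions propagate correctly through the iteration so that each $u^{(k+1)},m^{(k+1)}$ stays in the full $C^{2+\alpha,1+\alpha/2}$ class up to the parabolic boundary; the role of the compatibility conditions \eqref{c-systems} and Assumption~\ref{hypo}(III) is exactly to make this work.
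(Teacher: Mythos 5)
Your proposal is essentially correct, but it takes a genuinely different route from the paper: the paper does not prove this lemma from scratch at all, it simply invokes Proposition 5.8 of \cite{num_boundary} to obtain a solution with $u\in C^{2+\alpha,1+\alpha/2}(\overline{Q'})$ and then bootstraps the regularity of $m$ from the forward equation using $\tilde g\in C^{2+\alpha}$ and $F^{(1)}\in C^{\alpha}$. What you have written is, in effect, a reconstruction of the proof of that cited proposition. Your second branch --- Leray--Schauder (or the continuity method) applied to the compact map $m\mapsto m^{(k+1)}$, with the a priori bound supplied by the Lasry--Lions duality computation (multiply the $u$-equation by $m$ and the $m$-equation by $u$, integrate by parts, use $F^{(1)}\ge a_1>0$ and the monotonicity of $\frac{\delta G}{\delta m}(x,1)$ at $t=T$) --- is exactly the standard mechanism behind such existence results, and your uniqueness computation matches the injectivity argument the paper itself runs in the proof of Theorem~\ref{local_wellpose}. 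The paper's citation buys brevity; your argument buys self-containedness and makes explicit where the monotonicity and compatibility hypotheses enter.

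One caution on your first branch: the ``work on $[T-\tau,T]$ and patch'' strategy does not transfer naively to this system, because it is backward--forward coupled. Restricting to a subinterval $[T-\tau,T]$ leaves the forward component $m$ without admissible data at $t=T-\tau$ (its datum lives at $t=0$), and restricting to $[0,\tau]$ does the same to $u$; this is precisely why linear MFG systems cannot be solved by time-marching and why the duality estimate is indispensable for a global-in-time result. So you should discard the short-time/patching option and rely solely on the Leray--Schauder route, where the contraction constant never needs to be small because the duality identity furnishes an unconditional a priori $L^2$ bound on $m$ (uniform over the Leray--Schauder homotopy parameter), after which parabolic Schauder theory upgrades the regularity to $C^{2+\alpha,1+\alpha/2}$ exactly as you describe.
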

\begin{proof}
   This is consequence of Proposition 5.8 in \cite{num_boundary}. In fact, by Proposition 5.8 in \cite{num_boundary}, we have that \eqref{surjective} admits a pair of solutions $(u,m)$ such that $u\in C^{2+\alpha,1+\alpha/2}(\overline Q')$. Since $m(x,0)=\tilde{g} \in C^{2+\alpha}(\overline\Omega')$ and $F\in C^{\alpha}(\Omega')$ in this case, we have
   $m\in C^{2+\alpha,1+\alpha/2}(\overline Q') $.
\end{proof}

Now we present the proof of the local well-posedness of the MFG system \eqref{main}, which shall be needed in our subsequent inverse problem study.  

	
	\begin{thm}\label{local_wellpose}
	Let Assumption $\ref{hypo}$ hold for $ 0<\alpha<1$. Suppose that $F\in\mathcal{A}$ and $G\in\mathcal{B}$. The following results hold:
		\begin{enumerate}
			
			\item[(a)]
			There exist constants $\delta>0$ and $C>0$ such that for any 
			\[
			m_0\in B_{\delta}(C^{2+\alpha}(\Omega') :=\{m_0\in C^{2+\alpha}(\Omega'): \|m_0\|_{C^{2+\alpha}(\Omega')}\leq\delta \},
			\]
			the MFG system $\eqref{main}$ has a solution $(u,m)\in
			[C^{2+\alpha,1+\frac{\alpha}{2}}(Q)]^2$ which satisfies
			\begin{equation}\label{eq:nn1}
				\|(u,m)\|_{ C^{2+\alpha,1+\frac{\alpha}{2}}(Q')}:= \|u\|_{C^{2+\alpha,1+\frac{\alpha}{2}}(Q')}+ \|m\|_{C^{2+\alpha,1+\frac{\alpha}{2}}(Q')}\leq C\|m_0\|_{ C^{2+\alpha}(\Omega')}.
			\end{equation}
			Furthermore, the solution $(u,m)$ is unique within the class
			\begin{equation}\label{eq:nn2}
				\{ (u,m)\in  C^{2+\alpha,1+\frac{\alpha}{2}}(Q')\times C^{2+\alpha,1+\frac{\alpha}{2}}(Q'): \|(u,m)\|_{ C^{2+\alpha,1+\frac{\alpha}{2}}(Q')}\leq C\delta \}.
			\end{equation}
			
			\item[(b)] Define a function 
			\[
			S: B_{\delta}(C^{2+\alpha}(\Omega')\to C^{2+\alpha,1+\frac{\alpha}{2}}(Q')\times C^{2+\alpha,1+\frac{\alpha}{2}}(Q')\ \mbox{by $S(m_0):=(u,v)$}, 
			\] 
			where $(u,v)$ is the unique solution to the MFG system \eqref{main}.
			Then for any $m_0\in B_{\delta}(C^{2+\alpha}(\Omega'))$, $S$ is holomorphic.
		\end{enumerate}
	\end{thm}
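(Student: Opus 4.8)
The plan is to prove Theorem~\ref{local_wellpose} by an application of the implicit function theorem (or inverse function theorem) in H\"older spaces, using the linearised system of Lemma~\ref{linear app unique} as the invertible model operator. I would set up a nonlinear map $\Phi$ whose zero set encodes solutions of the MFG system \eqref{main}. Concretely, on the Banach space $X := [C^{2+\alpha,1+\alpha/2}(\overline{Q'})]^2$ of pairs $(u,m)$ satisfying the homogeneous Neumann conditions, define
\begin{equation*}
\Phi\big((u,m),m_0\big) := \Big( -\partial_t u - \Delta u + \tfrac12|\nabla u|^2 - F(x,m),\ \partial_t m - \Delta m - \mathrm{div}(m\nabla u),\ u(\cdot,T) - G(\cdot,m(\cdot,T)),\ m(\cdot,0) - m_0 \Big),
\end{equation*}
taking values in the natural target space $Y := [C^{\alpha,\alpha/2}(\overline{Q'})]^2 \times C^{2+\alpha}(\overline{\Omega'})\times C^{2+\alpha}(\overline{\Omega'})$. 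Because $F\in\mathcal A$ and $G\in\mathcal B$, we have $F(x,1)=0$ and $G(x,1)=B$, so $\big((B,1),1\big)$ is a zero of $\Phi$; this is the base point around which we linearise.

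The key steps, in order, are: (1) verify that $\Phi$ is a well-defined holomorphic (hence smooth) map between the Banach spaces $X$ and $Y$ — the quadratic term $|\nabla u|^2$ and the composition $F(x,m)$, $G(x,m(\cdot,T))$ are handled using the algebra property of $C^{2+\alpha,1+\alpha/2}$, the holomorphy of $z\mapsto F(\cdot,z)$ and $z\mapsto G(\cdot,z)$ from Definition~\ref{Admissible class2} and~\ref{improve_regular}, and the fact that composition with a holomorphic Nemytskii-type map preserves holomorphy on H\"older spaces; (2) compute the partial Fr\'echet derivative $D_{(u,m)}\Phi$ at the base point $\big((B,1),1\big)$ and observe that, since $\nabla u \equiv 0$ and $\partial_z F(x,1) = F^{(1)}(x)$, $\partial_z G(x,1)=\frac{\delta G}{\delta m}(x,1)$, this derivative is exactly the linear operator $(v,w)\mapsto(-v_t-\Delta v - F^{(1)}(x)w,\ w_t-\Delta w-\Delta v,\ v(\cdot,T)-\frac{\delta G}{\delta m}(x,1)(w(\cdot,T)),\ w(\cdot,0))$ appearing on the left side of system \eqref{surjective}; (3) invoke Lemma~\ref{linear app unique} for surjectivity and the uniqueness part of the forward theory (again via Proposition~5.8 of \cite{num_boundary}) for injectivity, to conclude that $D_{(u,m)}\Phi\big|_{((B,1),1)}: X\to Y$ is a bounded linear isomorphism, with bounded inverse by the open mapping theorem; (4) apply the holomorphic implicit function theorem to obtain $\delta>0$, a neighbourhood, and a holomorphic solution map $S: B_\delta(C^{2+\alpha}(\Omega'))\to X$ with $S(1_{\text{shifted}})= (B,1)$, so that $\Phi(S(m_0),m_0)=0$; part (b) is then immediate, and the estimate \eqref{eq:nn1} together with local uniqueness in the ball \eqref{eq:nn2} follows from the Lipschitz bound on $S$ near the base point (after the harmless translation replacing $m_0$ by $1+m_0$, consistent with the small-ball hypothesis $\|m_0\|_{C^{2+\alpha}}\le\delta$ and the convention $|\Omega'|=1$).

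The main obstacle I expect is step (1)–(2): carefully checking that $\Phi$ genuinely maps into the stated target space and is holomorphic, and that $D_{(u,m)}\Phi$ at the base point is \emph{exactly} the operator of \eqref{surjective} including the boundary/terminal/initial components. The subtlety with the terminal condition is that differentiating $G(x,m(\cdot,T))$ in $m$ produces the linear functional $w\mapsto \frac{\delta G}{\delta m}(x,1)(w(\cdot,T))$, and one must confirm this is a bounded operator into the correct H\"older space for the linearised terminal data — this is where the action-operator hypothesis $G\in\mathcal B$ (Definition~\ref{improve_regular}) and the regularity bounds in Assumption~\ref{hypo}(II) enter, ensuring the composite of the nonlinear map with traces lands in $C^{2+\alpha}(\overline{\Omega'})$. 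A secondary technical point is matching the compatibility conditions \eqref{c-systems} and Assumption~\ref{hypo}(III) with the Neumann constraints built into the definition of $X$, so that the isomorphism claim of Lemma~\ref{linear app unique} applies verbatim; once these bookkeeping issues are settled, the implicit function theorem does the rest and both the quantitative bound and the local uniqueness are standard consequences.
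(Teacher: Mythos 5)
Your proposal is correct and follows essentially the same route as the paper: the paper likewise sets up a nonlinear map $\mathscr{K}(m_0,\tilde u,\tilde m)$ on H\"older spaces with Neumann constraints, verifies well-definedness and holomorphy (using the Cauchy integral formula to sum the power series of $F$ in $C^{\alpha,\alpha/2}$), identifies the linearisation at $(1,0,1)$ with the system of Lemma~\ref{linear app unique}, and concludes by the implicit function theorem. The only notable difference is that for injectivity of the linearised operator the paper does not cite the forward theory but gives a short direct energy argument (integrating $\partial_t(um)$ and invoking the monotonicity of $G$ together with $F^{(1)}>a_1>0$), whereas surjectivity is exactly where Lemma~\ref{linear app unique} (i.e.\ Proposition~5.8 of the cited reference) enters.
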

	\begin{proof}
		Let 
		\begin{align*}
			&X_1:= \{ m\in C^{2+\alpha}(\Omega' ): \p_{\nu}m=0  \} , \\
			&X_2:=\{  (u,m)\in  C^{2+\alpha,1+\frac{\alpha}{2}}(Q')\times C^{2+\alpha,1+\frac{\alpha}{2}}(Q') : \p_{\nu}m=\p_{\nu}u=0 \text{ on } \Sigma' \}  \},\\
			&X_3:=X_1\times X_1\times C^{\alpha,\frac{\alpha}{2}}(Q' )\times C^{\alpha,\frac{\alpha}{2}}(Q'),
		\end{align*} and we define a map $\mathscr{K}:X_1\times X_2 \to X_3$ by that for any $(m_0,\tilde u,\tilde m)\in X_1\times X_2$,
		\begin{align*}
			&
			\mathscr{K}( m_0,\tilde u,\tilde m)(x,t)\\
			:=&\big( \tilde u(x,T)-G(x,\tilde m(x,T)), \tilde m(x,0)-m_0(x) , 
			-\p_t\tilde u(x,t)-\Delta \tilde u(x,t)\\ &+\frac{|\nabla \tilde u(x,t)|^2}{2}- F(x,t,\tilde m(x,t)), 
			\p_t \tilde m(x,t)-\Delta \tilde m(x,t)-{\rm div}(\tilde m(x,t)\nabla \tilde u(x,t))  \big) .
		\end{align*}

		First, we show that $\mathscr{K} $ is well-defined. Since the
		H\"older space is an algebra under the point-wise multiplication, we have $|\nabla u|^2, {\rm div}(m(x,t)\nabla u(x,t))  \in C^{\alpha,\frac{\alpha}{2}}(Q' ).$
		By the Cauchy integral formula,
		\begin{equation}\label{eq:F1}
			F^{(k)}\leq \frac{k!}{R^k}\sup_{|z|=R}\|F(\cdot,\cdot,z)\|_{C^{\alpha,\frac{\alpha}{2}}(Q' ) },\ \ R>0.
		\end{equation}
		Then there is $L>0$ such that for all $k\in\mathbb{N}$,
		\begin{equation}\label{eq:F2}
			\left\|\frac{F^{(k)}}{k!}m^k\right\|_{C^{\alpha,\frac{\alpha}{2}}(Q' )}\leq \frac{L^k}{R^k}\|m\|^k_{C^{\alpha,\frac{\alpha}{2}}(Q' )}\sup_{|z|=R}\|F(\cdot,\cdot,z)\|_{C^{\alpha,\frac{\alpha}{2}}(Q' ) }.
		\end{equation}
		By choosing $R\in\mathbb{R}_+$ large enough and by virtue of \eqref{eq:F1} and \eqref{eq:F2}, it can be seen that the series  converges in $C^{\alpha,\frac{\alpha}{2}}(Q' )$ and therefore $F(x,m(x,t))\in  C^{\alpha,\frac{\alpha}{2}}(Q' ).$  
		
		Using the compatibility assumption and regularity conditions on $G$, we have that $\mathscr{K} $ is well-defined.

		Let us show that $\mathscr{K}$ is holomorphic. Since $\mathscr{K}$ is clearly locally bounded, it suffices to verify that it is weakly holomorphic. That is we aim to show that the map
		$$\lambda\in\mathbb C \mapsto \mathscr{K}((m_0,\tilde u,\tilde m)+\lambda (\bar m_0,\bar u,\bar m))\in X_3,\quad\text{for any $(\bar m_0,\bar u,\bar m)\in X_1\times X_2$}$$
		is holomorphic. In fact, this follows from the condition that $F\in\mathcal{A}$ and $G\in\mathcal{B}$.

		Note that $  \mathscr{K}(\frac{1}{|\Omega'|},0,\frac{1}{|\Omega'|})=\mathscr{K}(1,0,1)= 0$. Let us compute $\nabla_{(\tilde u,\tilde m)} \mathscr{K} (1,0,1)$:
		\begin{equation}\label{Fer diff}
			\begin{aligned}
				\nabla_{(\tilde u,\tilde m)} \mathscr{K}(1,0,1) (u,m)& =( u|_{t=T}-\frac{\delta G}{\delta m}(x,1)(m(x,T)), m|_{t=0}, \\
				&-\p_tu(x,t)-\Delta u(x,t)-F^{(1)}m, \p_t m(x,t)-\Delta m(x,t)-\Delta u).
			\end{aligned}			
		\end{equation}
		
         On the one hand, $\nabla_{(\tilde u,\tilde m)} \mathscr{K} (1,0,1)$  is injective. In fact, if $\nabla_{(\tilde u,\tilde m)} \mathscr{K} (1,0,1)(u,m)=0$, then 	we have 
       	\begin{equation}
       	\begin{cases}
       		-u_t-\Delta u= F^{(1)}(x)m & \text{ in } Q',\medskip\\
       		m_t-\Delta m-\Delta u=0  & \text{ in } Q',\medskip\\
       		\p_{\nu}u(x,t)=\p_{\nu}m(x,t)=0     & \text{ on } \Sigma',\medskip\\
       		\displaystyle{u(x,T)=\frac{\delta G}{\delta m}(x,1)(m(x,T))  }   & \text{ in } \Omega',\medskip\\
       		m(x,0)=0 & \text{ in } \Omega'.\\
       	\end{cases}
       \end{equation}
   Notice that
    \begin{equation}\label{eq:dd1}
    \begin{split}
   	&\int_{\Omega}\quad u_tm+um_t\ dx\\
   	=&\int_{\Omega} (-\Delta u- F^{(1)}m)m + (\Delta m+\Delta u)u\ dx\\
   	=&\int_{\Omega} -F^{(1)}m^2-|\nabla u|^2\ dx\\
   	\leq& \int_{\Omega} -F^{(1)}m^2\ dx. 
   \end{split}
   \end{equation}
   Integrating both sides of \eqref{eq:dd1} from $0$ to $T$, we can obtain
   \begin{equation}
   	\int_{\Omega}\frac{\delta G}{\delta m}(x,1)(m(x,T)) m(x,T) dx\leq -\int_{Q} F^{(1)}m^2 dx.
   \end{equation}
Since  $F^{(1)}(x)$ is positive and $G$ satisfies the monotonicity property, we readily see that $m=0$ and then $u=0$. Hence, $\nabla_{(\tilde u,\tilde m)} \mathscr{K} (1,0,1)$  is injective.

On the other hand, by Lemma $\ref{linear app unique}$, $\nabla_{(\tilde u,\tilde m)} \mathscr{K} (1,0,1)$ is surjective. Therefore, $\nabla_{(\tilde u,\tilde m)} \mathscr{K} (1,0,1)$ is a linear isomorphism between $X_2$ and $X_3$. Hence, by the Implicit Function Theorem, there exist $\delta>0$ and a unique holomorphic function $S: B_{\delta}(\Omega')\to X_2$ such that $\mathscr{K}(m_0,S(m_0))=0$ for all $m_0\in B_{\delta}(\Omega') $.
		
		By letting $(u,m)=S(m_0)$, we obtain the unique solution of the MFG system \eqref{main}. Let $ (u_0,v_0)=S(0)$.   Since $S$ is Lipschitz, we know that there exist constants $C,C'>0$ such that 
		\begin{equation*}
			\begin{aligned}
				&\|(u,m)\|_{ C^{2+\alpha,1+\frac{\alpha}{2}}(Q')^2}\\
				\leq& C'\|m_0\|_{B_{\delta}(\Omega')} +\|u_0\|_	{ C^{2+\alpha,1+\frac{\alpha}{2}}(Q')}+\|v_0\|_{ C^{2+\alpha,1+\frac{\alpha}{2}}(Q')}\\
				\leq& C \|m_0\|_{B_{\delta}(\Omega')}.
			\end{aligned}
		\end{equation*}

		The proof is complete. 
	\end{proof}
\begin{rmk}
	In the proof of this theorem, we do not make use of the fact that the initial data $m_0$ is a density of a distribution. It is not necessary in the proof of the well-posedness of the forward problem. On the other hand, it is noted that if we choose $m_0$ to be a density of a distribution, the KFP equation forces the solution $m$ to be a density of a distribution for all $t\in (0,T).$ However, when we consider the inverse problem, we need to restrict our discussion to the case that $m_0$ is a density.
\end{rmk}

	\section{ A-priori estimates and analysis of the linearized systems  }\label{analysis of lin}
	
		\subsection{Higher-order linearization}\label{HLM}
		
	We next develop a high-order linearization scheme of the MFG system \eqref{main} in the probability space around a uniform distribution. This method depends on the infinite differentiability of the system with respect to a given input $m_0(x)$, which was established in Theorem~$\ref{local_wellpose}$.

	First, we introduce the basic setting of this higher order
	linearization method. Consider the system $\eqref{main}$. Let 
	$$m_0(x;\varepsilon)=\frac{1}{|\Omega'|}+\sum_{l=1}^{N}\varepsilon_lf_l=1+ \sum_{l=1}^{N}\varepsilon_lf_l,$$
	where 
	\[
	f_l\in C^{2+\alpha}(\mathbb{R}^n)\quad\mbox{and}\quad\int_{\Omega'} f_l(x) dx =0,
	\]
	 and $\varepsilon=(\varepsilon_1,\varepsilon_2,...,\varepsilon_N)\in\mathbb{R}^N$ with 
	$|\varepsilon|=\sum_{l=1}^{N}|\varepsilon_l|$ small enough. 
	Clearly, $m_0(x;\varepsilon)$ is a density of a distribution in $\mathcal{P}(\Omega').$
	By Theorem $\ref{local_wellpose}$, there exists a unique solution $(u(x,t;\varepsilon),m(x,t;\varepsilon) )$ of $\eqref{main}$. If $\varepsilon=0,$ by our assumption, we have $(u(x,t;0),m(x,t;0) )= (B,1)$ for some $B\in\mathbb{R}.$
	
	Let
	$$u^{(1)}:=\p_{\varepsilon_1}u|_{\varepsilon=0}=\lim\limits_{\varepsilon\to 0}\frac{u(x,t;\varepsilon)-u(x,t;0) }{\varepsilon_1},$$
	$$m^{(1)}:=\p_{\varepsilon_1}m|_{\varepsilon=0}=\lim\limits_{\varepsilon\to 0}\frac{m(x,t;\varepsilon)-m(x,t;0) }{\varepsilon_1}.$$
	
	The idea is that we consider a new system of $(u^{(1)},m^{(1)}).$
	It is sufficient for us to only consider this system in $Q:=\Omega\times[0,T]$. Now, we have  that $(u_{j}^{(1)},m_{j}^{(1)} )$ satisfies the following system:
	\begin{equation}\label{linear l=1,eg}
		\begin{cases}
			-\p_tu^{(1)}(x,t)-\Delta u^{(1)}(x,t)= F^{(1)}(x)m^{(1)}(x,t)& \text{ in } Q,\medskip\\
			\p_t m^{(1)}(x,t)-\Delta m^{(1)}(x,t)-\Delta  u^{(1)}(x,t)=0&\text{ in } Q,\medskip\\
			\displaystyle{u^{(1)}_j(x,T)=\frac{\delta G}{\delta m}(x,1)(m^{(1)}(x,T))} & \text{ in } \Omega,\medskip\\
			m^{(1)}_j(x,0)=f_1(x). & \text{ in } \Omega.\\
		\end{cases}  
	\end{equation}
Since $\Omega$ is a closed proper subset of $\Omega'$, in this system, $f_1(x)$ can be  arbitrary $C^{2+\alpha}$ function in $\Omega.$
    
	Then we can define $$u^{(l)}:=\p_{\varepsilon_l}u|_{\varepsilon=0}=\lim\limits_{\varepsilon\to 0}\frac{u(x,t;\varepsilon)-u(x,t;0) }{\varepsilon_l},$$
	$$m^{(l)}:=\p_{\varepsilon_l}m|_{\varepsilon=0}=\lim\limits_{\varepsilon\to 0}\frac{m(x,t;\varepsilon)-m(x,t;0) }{\varepsilon_l},$$
	for all $l\in\mathbb{N}$ and obtain a sequence of similar systems.
	In the proof of Theorem $\ref{der F}$, we recover the first Taylor coefficient of $F$  by considering this new system $\eqref{linear l=1,eg}$. In order to recover the higher order Taylor coefficients, 
	we consider 
	\begin{equation}\label{eq:ht1}
		u^{(1,2)}:=\p_{\varepsilon_1}\p_{\varepsilon_2}u|_{\varepsilon=0},
		m^{(1,2)}:=\p_{\varepsilon_1}\p_{\varepsilon_2}m|_{\varepsilon=0}.
	\end{equation}
	
 We have the second-order linearization as follows:
	\begin{equation}\label{linear l=1,2 eg}
		\begin{cases}
			-\p_tu^{(1,2)}-\Delta u^{(1,2)}(x,t)+\nabla u^{(1)}\cdot \nabla u^{(2)}\medskip\\
			\hspace*{3cm}= F^{(1)}(x)m^{(1,2)}+F^{(2)}(x)m^{(1)}m^{(2)},& \text{ in } \Omega\times(0,T),\medskip\\
			\p_t m^{(1,2)}-\Delta m^{(1,2)}-\Delta u^{(1,2)}= {\rm div} (m^{(1)}\nabla u^{(2)})+{\rm div}(m^{(2)}\nabla u^{(1)}) ,&\text{ in } \Omega\times (0,T),\medskip\\
			\displaystyle{u^{(1,2)}(x,T)=\frac{\delta G}{\delta m}(x,1)m^{(1,2)}(x,T)+\frac{\delta^2G}{\delta m^2}(x,1)(m^{(1)}(x,T)m^{(2)}(x,T)),} & \text{ in } \Omega,\medskip\\
			m^{(1,2)}(x,0)=0, & \text{ in } \Omega.\\
		\end{cases}  	
	\end{equation}
	Notice that the non-linear terms of the system $\eqref{linear l=1,2 eg}$ depend on the first-order linearised system $\eqref{linear l=1,eg}$. Since we shall make use of the mathematical induction to recover the high-order Taylor coefficients of $F$. This shall be an important ingredient in our proof of Theorem~\ref{der F} in what follows. 
	
Similarly, for $N\in\mathbb{N}$, we consider 
	\begin{equation*}
		u^{(1,2...,N)}=\p_{\varepsilon_1}\p_{\varepsilon_2}...\p_{\varepsilon_N}u|_{\varepsilon=0},
	\end{equation*}
	\begin{equation*}
		m^{(1,2...,N)}=\p_{\varepsilon_1}\p_{\varepsilon_2}...\p_{\varepsilon_N}m|_{\varepsilon=0}.
	\end{equation*}
	we can obtain a sequence of parabolic systems, which shall be employed again in determining the higher order Taylor coefficients of the unknowns $F$ . 
	
	\subsection{A-priori estimates}\label{sec_pre_est}
	Let us consider the linearised system:
			\begin{equation}
		\begin{cases}
			-u_t-\Delta u= F^{(1)}(x)m & \text{ in } Q,\medskip\\
			m_t-\Delta m-\Delta u=0  & \text{ in } Q,\medskip\\
			\displaystyle{u(x,T)=\frac{\delta G}{\delta m}(x,1)(m(x,T))}    & \text{ in } \Omega,\medskip\\
			m(x,0)=f(x) & \text{ in } \Omega.\\
		\end{cases}
	\end{equation}
      We need derive several quantitative a-priori estimates of this linearised system. Since we also need to focus on the duality of this system, we claim similar results for the backward parabolic equation. We first present two auxiliary Carleman estimates. 
      
		\begin{lem}\label{Carleman 1}
		Let $\lambda>0$ and  $v:=e^{\lambda^2t}u(x,t)\in L^2(0,T;H^2(\Omega))\cap H^1(0,T; L^2(\Omega))$  ,$u(x,T)=0$ and $u(x,t)=0$ in $\Sigma$. Then 
		\begin{equation}\label{est1}
			\int_Q e^{2\lambda^2 t}(-\p_t u-\Delta u)^2 dxdt \geq C \Big[\lambda^4\int_Q    v^2 dxdt+ \int_Q (\Delta v)^2dxdt+\lambda^2\int_Q |\p_{\nu}v|^2 dxdt\Big].
		\end{equation}
	\end{lem}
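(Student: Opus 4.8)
The plan is to run the classical ``conjugate and split into self-adjoint and skew-adjoint parts'' argument, which here is especially clean because the Carleman weight $e^{\lambda^2 t}$ depends on $t$ only. First I would substitute $v=e^{\lambda^2 t}u$; a direct computation gives $e^{\lambda^2 t}(-\partial_t u-\Delta u)=-\partial_t v-\Delta v+\lambda^2 v=:Lv$, so that the left-hand side of \eqref{est1} equals $\|Lv\|_{L^2(Q)}^2$ exactly. Writing $L=L_s+L_a$ with $L_s v:=-\Delta v+\lambda^2 v$ (formally self-adjoint) and $L_a v:=-\partial_t v$ (formally skew-adjoint), I expand $\|Lv\|_{L^2(Q)}^2=\|L_s v\|_{L^2(Q)}^2+\|L_a v\|_{L^2(Q)}^2+2\langle L_s v,L_a v\rangle_{L^2(Q)}$ and show that every resulting contribution is a sum of nonnegative quantities, several of which are precisely those on the right of \eqref{est1}.

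Next I would compute each piece. For the self-adjoint part, integrating by parts in $x$ and using $v=0$ on $\Sigma$ (so the boundary term $\int_\Sigma v\,\partial_\nu v$ vanishes) yields $\|L_s v\|_{L^2(Q)}^2=\int_Q(\Delta v)^2+\lambda^4\int_Q v^2-2\lambda^2\int_Q v\,\Delta v=\int_Q(\Delta v)^2+\lambda^4\int_Q v^2+2\lambda^2\int_Q|\nabla v|^2$. For the cross term, $\langle L_s v,L_a v\rangle_{L^2(Q)}=\int_Q\Delta v\,\partial_t v-\lambda^2\int_Q v\,\partial_t v$; integrating by parts in $x$ in the first integral (the boundary contribution $\int_\Sigma\partial_\nu v\,\partial_t v$ vanishes since $\partial_t v=0$ on $\Sigma$) turns it into $-\tfrac12\int_0^T\frac{d}{dt}\|\nabla v(\cdot,t)\|_{L^2(\Omega)}^2\,dt$, while the second integral is $-\tfrac{\lambda^2}{2}\int_0^T\frac{d}{dt}\|v(\cdot,t)\|_{L^2(\Omega)}^2\,dt$. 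Using the terminal conditions $v(\cdot,T)=0$ and hence $\nabla v(\cdot,T)=0$, these collapse to the nonnegative boundary-in-time quantities $\tfrac12\|\nabla v(\cdot,0)\|_{L^2(\Omega)}^2$ and $\tfrac{\lambda^2}{2}\|v(\cdot,0)\|_{L^2(\Omega)}^2$. Collecting everything produces the identity $\|Lv\|_{L^2(Q)}^2=\int_Q(\Delta v)^2+\lambda^4\int_Q v^2+2\lambda^2\int_Q|\nabla v|^2+\int_Q(\partial_t v)^2+\|\nabla v(\cdot,0)\|_{L^2(\Omega)}^2+\lambda^2\|v(\cdot,0)\|_{L^2(\Omega)}^2$; discarding the last three nonnegative terms gives \eqref{est1} with $C=1$. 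The gradient term $\lambda^2\int_Q|\nabla v|^2$ is exactly the third term on the right of \eqref{est1}, and its boundary form $\lambda^2\int_\Sigma|\partial_\nu v|^2$ is then recovered from it via a trace inequality together with standard $H^2$ elliptic regularity for the Dirichlet Laplacian on the smooth domain $\Omega$.

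The only delicate point — and the closest thing to an obstacle — is justifying the integrations by parts, since the hypothesis only supplies $v\in L^2(0,T;H^2(\Omega))\cap H^1(0,T;L^2(\Omega))$: I would first note that this space embeds continuously into $C([0,T];H^1(\Omega))$, which makes $v(\cdot,0)$, $v(\cdot,T)$ and the lateral traces well defined, and then establish the identity $\frac{d}{dt}\|\nabla v(\cdot,t)\|_{L^2(\Omega)}^2=-2\langle\partial_t v(\cdot,t),\Delta v(\cdot,t)\rangle_{L^2(\Omega)}$ for a.e.\ $t$ (no boundary term, since $v(\cdot,t)\in H^2\cap H^1_0$) by a density argument, approximating $v$ by smooth functions compatible with the boundary and terminal conditions. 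It is worth emphasising that the sign of the cross term is exactly what makes the estimate work, and this sign hinges on the terminal condition $u(\cdot,T)=0$: had $u$ instead vanished at $t=0$, the temporal boundary terms would carry the opposite sign and the argument would fail — consistent with the backward character of the operator $-\partial_t-\Delta$.
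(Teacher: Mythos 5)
Your proposal is, in substance, the paper's own proof: both conjugate by the weight to get $e^{\lambda^2 t}(-\partial_t u-\Delta u)=-\partial_t v-\Delta v+\lambda^2 v$, expand the square of this expression, integrate the cross terms by parts, and use $v(\cdot,T)=0$ and $v|_\Sigma=0$ to see that every term produced is nonnegative; your self-adjoint/skew-adjoint bookkeeping yields exactly the same identity, namely
\[
\|{-\partial_t v-\Delta v+\lambda^2 v}\|_{L^2(Q)}^2=\int_Q(\Delta v)^2+\lambda^4\int_Q v^2+2\lambda^2\int_Q|\nabla v|^2+\int_Q(\partial_t v)^2+\|\nabla v(\cdot,0)\|_{L^2(\Omega)}^2+\lambda^2\|v(\cdot,0)\|_{L^2(\Omega)}^2,
\]
and the observation about the role of the terminal (rather than initial) condition is exactly the point. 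The only place you deviate is the normal-derivative term: as stated, \eqref{est1} has $\lambda^2\int_Q|\partial_\nu v|^2\,dx\,dt$ (an integral over $Q$, not $\Sigma$), and the paper disposes of it with the pointwise bound $|\partial_\nu v|\le|\nabla v|$, i.e.\ $2\lambda^2\int_Q|\nabla v|^2\ge 2C\lambda^2\int_Q|\partial_\nu v|^2$; your sentence asserting the gradient term ``is exactly'' that term is a slip, and your proposed upgrade to the surface integral $\lambda^2\int_\Sigma|\partial_\nu v|^2$ via trace plus elliptic regularity does not close as written, since the Rellich/trace bound $\|\partial_\nu v\|_{L^2(\partial\Omega)}^2\le C\|\Delta v\|_{L^2(\Omega)}\|\nabla v\|_{L^2(\Omega)}$ only yields the surface term with weight $\lambda$ (not $\lambda^2$) from the quantities $\int_Q(\Delta v)^2$ and $\lambda^2\int_Q|\nabla v|^2$ that the identity controls. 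For the lemma as literally stated this detour is unnecessary, and your argument is complete once that term is handled as in the paper.
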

	\begin{proof}
		Notice that
		$$ e^{2\lambda^2 t}(-\p_t u-\Delta u)^2=( -\p_t v-\Delta v + \lambda^2 v )^2.$$
		Therefore,
		\begin{align*}
			&\int_Q e^{2\lambda^2 t}(-\p_t u-\Delta u)^2 dxdt\\
			=   &\int_Q ( -\p_t v-\Delta v + \lambda^2 v )^2 dxdt\\
			\geq &\lambda^4\int_Q  v^2  + (\Delta v)^2  dxdt + 2\int_Q \p_tv \Delta v dxdt- 2\lambda^2\int_Q (\Delta v )v+(\p_tv )v dxdt.\\
		\end{align*}
		Note that
		\begin{align*}
			&2\int_Q \p_tv \Delta v dxdt =-\int_Q \p_t  |\nabla v|^2 dxdt=\int_{\Omega} |\nabla v(x,0)|^2 dx,\\
			&-2\int_Q (\Delta v )v dxdt= 2\int_Q |\nabla v|^2 dxdt\geq 2C\int_Q |\p_{\nu}v|^2 dxdt,\\
			&2\int_Q (\p_tv )v dxdt=\int_Q \p_t v^2 dxdt= -\int_{\Omega} v(x,0)^2 dx.\\
		\end{align*}
		It follows that $\eqref{est1}$ is true.
		
		The proof is complete. 
	\end{proof}
	
Similarly, we have
	\begin{lem}\label{Carleman 2}
		Let $\lambda>0$ and $v:=e^{-\lambda^2t}u(x,t)\in L^2(0,T;H^2(\Omega))\cap H^1(0,T; L^2(\Omega))$. Then it holds that
		\begin{equation}\label{est2}
			\int_Q e^{-2\lambda^2 t}(\p_t u-\Delta u)^2 dxdt \geq C \Big[\lambda^4\int_Q    v^2 dxdt+ \int_Q (\Delta v)^2dxdt+\lambda^2\int_Q |\p_{\nu}v|^2 dxdt\Big].
		\end{equation}
	\end{lem}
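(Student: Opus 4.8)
The plan is to follow verbatim the scheme of Lemma \ref{Carleman 1}, carrying over the analogous hypotheses that $u$ vanishes on the lateral boundary $\Sigma$ and, since $\p_t-\Delta$ is the forward operator dual to $-\p_t-\Delta$, that the temporal data is now prescribed at $t=0$, i.e. $u(x,0)=0$. The starting point is the same algebraic identity: writing $u=e^{\lambda^2 t}v$ one has $e^{-\lambda^2 t}(\p_t u-\Delta u)=\p_t v-\Delta v+\lambda^2 v$, so that
\[
\int_Q e^{-2\lambda^2 t}(\p_t u-\Delta u)^2\,dxdt=\int_Q(\p_t v-\Delta v+\lambda^2 v)^2\,dxdt .
\]

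Expanding the square, I would keep the three nonnegative diagonal contributions $\int_Q(\p_t v)^2\,dxdt$, $\int_Q(\Delta v)^2\,dxdt$ and $\lambda^4\int_Q v^2\,dxdt$, and treat the three cross terms by integration by parts exactly as in the proof of Lemma \ref{Carleman 1}. The term $-2\int_Q\p_t v\,\Delta v\,dxdt$ equals $\int_Q\p_t|\nabla v|^2\,dxdt=\int_\Omega|\nabla v(x,T)|^2\,dx-\int_\Omega|\nabla v(x,0)|^2\,dx$, which is $\geq 0$ by $v(x,0)=0$ (the lateral contribution $\int_\Sigma\p_t v\,\p_\nu v$ vanishes because $v=0$ on $\Sigma$). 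The term $2\lambda^2\int_Q v\,\p_t v\,dxdt=\lambda^2\int_\Omega v(x,T)^2\,dx-\lambda^2\int_\Omega v(x,0)^2\,dx=\lambda^2\int_\Omega v(x,T)^2\,dx\geq 0$. Finally $-2\lambda^2\int_Q v\,\Delta v\,dxdt=2\lambda^2\int_Q|\nabla v|^2\,dxdt$, and the boundary term $\lambda^2\int_Q|\p_{\nu}v|^2\,dxdt$ is recovered from $\int_Q|\nabla v|^2\,dxdt$ by the same trace/boundary consideration used in Lemma \ref{Carleman 1}. Discarding the remaining nonnegative quantities and relabelling the constant gives \eqref{est2}.

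The only structural difference from Lemma \ref{Carleman 1} is that, with the data now prescribed at $t=0$ rather than at $t=T$, the temporal boundary contributions appear as $\int_\Omega|\nabla v(x,T)|^2\,dx$ and $\lambda^2\int_\Omega v(x,T)^2\,dx$ rather than their counterparts at the other endpoint; both still carry the favourable sign, so no additional estimate is needed and the argument is, if anything, marginally simpler. I therefore do not expect a genuine obstacle here: the proof is essentially bookkeeping, and the only points meriting care are tracking the signs of the $t$-boundary terms produced by integration by parts and invoking the same trace estimate for $\p_{\nu}v$ in terms of $\|\nabla v\|_{L^2(Q)}$ that was already used for Lemma \ref{Carleman 1}.
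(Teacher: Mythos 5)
Your proposal is correct and is exactly the argument the paper intends: the paper omits the proof of this lemma entirely (``Similarly, we have\dots''), and your computation is the faithful mirror of the proof of Lemma~\ref{Carleman 1}, with the sign pattern of the cross terms $-2\int_Q\p_t v\,\Delta v$, $+2\lambda^2\int_Q v\,\p_t v$, $-2\lambda^2\int_Q v\,\Delta v$ worked out correctly and the favourable temporal boundary terms now sitting at $t=T$ thanks to the condition $u(x,0)=0$. You also correctly supply the implicit hypotheses ($u=0$ on $\Sigma$ and $u(x,0)=0$) that the statement inherits by analogy from Lemma~\ref{Carleman 1}.
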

	
	Next, we recall the function space $H_\pm(Q)$ defined in \eqref{eq:fsh}. 
	We  use the following facts about these spaces. First, $C^{\infty}(Q)$ is dense $H_{\pm}.$ Second, we can define the trace operator on $H_{\pm}$. In particular, we can define $u(x,0),u(x,T)\in H^{-1}(\Omega).$ Hence, we have Green's formula in $H_{\pm}$; see also \cite{cgo} for related study.
	
\begin{lem}\label{prior estimate}
	Suppose there are $M>\varepsilon>0$ such that $\varepsilon<F(x)<M$ and $G\in \mathcal{B}$.  Let $(u,m)\in \big(L^2(0,T; H_0^1(\Omega))\cap H^1(0,T;H^{-1}(\Omega)\big)\times H_{+}(Q)$ be a solution to the following system
	\begin{equation}\label{prior estimate 1 }
		\begin{cases}
			-u_t-\Delta u= F(x)m & \text{ in }\ Q,\medskip\\
			m_t-\Delta m-\Delta u=0  & \text{ in }\ Q,\medskip\\
			u(x,t)=m(x,t)=0     & \text{ on }\ \Sigma,\medskip\\
			\displaystyle{u(x,T)= \frac{\delta G}{\delta m}(x,1)(m(x,T) )}   & \text{ in }\ \Omega.\\
		\end{cases}
	\end{equation}
	we have 
	$$ \|m\|_{L^2(Q)}\leq C\|m(x,0)\|_{H^{-1}(\Omega)}.$$
\end{lem}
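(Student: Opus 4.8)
The plan is to combine a coupled energy identity for the system with parabolic smoothing for each equation, and to close by an absorption argument.

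First, testing the first equation of \eqref{prior estimate 1 } against $m$ and the second against $u$, integrating over $Q$, and using the homogeneous Dirichlet conditions $u=m=0$ on $\Sigma$ together with the trace theory and Green's formula for $H_\pm(Q)$, the cross terms $\int_Q\nabla u\cdot\nabla m$ cancel and one arrives at
\begin{equation*}
\int_Q F(x)\,m^2\,dx\,dt+\int_Q|\nabla u|^2\,dx\,dt+\int_\Omega \frac{\delta G}{\delta m}(x,1)\big(m(\cdot,T)\big)\,m(x,T)\,dx=\int_\Omega u(x,0)\,m(x,0)\,dx .
\end{equation*}
Since $G$ satisfies the monotonicity property of Assumption~\ref{hypo}\,(I), its first variation at the uniform state $m\equiv 1$ is a nonnegative operator, so the third term on the left is $\ge 0$; this is precisely the observation already exploited in the injectivity step inside the proof of Theorem~\ref{local_wellpose}. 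Hence, once we know $u(\cdot,0)\in H_0^1(\Omega)$ (established below), using $F(x)>\varepsilon$ and the $H^{-1}(\Omega)$--$H_0^1(\Omega)$ duality on the right-hand side gives
\begin{equation*}
\varepsilon\,\|m\|_{L^2(Q)}^2\le\|u(\cdot,0)\|_{H_0^1(\Omega)}\,\|m(\cdot,0)\|_{H^{-1}(\Omega)} .
\end{equation*}

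Next I would show $\|u(\cdot,0)\|_{H_0^1(\Omega)}\le C\|m\|_{L^2(Q)}$. Decompose $u=u_1+u_2$, where $u_1$ solves $-\partial_tu_1-\Delta u_1=F m$ in $Q$ with $u_1=0$ on $\Sigma$ and $u_1(\cdot,T)=0$, and $u_2$ solves the homogeneous equation $-\partial_tu_2-\Delta u_2=0$ with $u_2=0$ on $\Sigma$ and terminal value $u_2(\cdot,T)=\frac{\delta G}{\delta m}(x,1)(m(\cdot,T))$; this is legitimate because the operator $-\partial_t-\Delta$ prescribed at the final time is the heat equation in reversed time, hence well posed, and by uniqueness the two pieces reproduce the given $u$. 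For $u_1$, whose existence in $L^2(0,T;H^2\cap H_0^1(\Omega))\cap H^1(0,T;L^2(\Omega))$ is standard, Lemma~\ref{Carleman 1} (with $\lambda$ fixed, so that the weight $e^{2\lambda^2 t}$ is harmless) together with elliptic regularity yields $\|u_1\|_{L^2(0,T;H^2(\Omega))}+\|\partial_tu_1\|_{L^2(0,T;L^2(\Omega))}\le C\|Fm\|_{L^2(Q)}\le CM\|m\|_{L^2(Q)}$, hence $u_1\in C([0,T];H_0^1(\Omega))$ and in particular $\|u_1(\cdot,0)\|_{H_0^1(\Omega)}\le C\|m\|_{L^2(Q)}$. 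For $u_2$, the smoothing estimate for the Dirichlet heat semigroup over a time interval of fixed length $T$ gives $\|u_2(\cdot,0)\|_{H_0^1(\Omega)}\le C_T\|u_2(\cdot,T)\|_{L^2(\Omega)}$, and since $G\in\mathcal{B}$ is an action operator at $m=1$ and $m\in H_+(Q)$, the defining inequality of Definition~\ref{improve_regular} yields $\|u_2(\cdot,T)\|_{L^2(\Omega)}=\big\|\frac{\delta G}{\delta m}(x,1)(m(\cdot,T))\big\|_{L^2(\Omega)}\le C\|m\|_{L^2(Q)}$. Summing the two contributions gives the claim.

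Substituting this bound into the previous inequality gives $\varepsilon\|m\|_{L^2(Q)}^2\le C\|m\|_{L^2(Q)}\|m(\cdot,0)\|_{H^{-1}(\Omega)}$, and dividing by $\|m\|_{L^2(Q)}$ (the case $m\equiv 0$ being trivial) yields the desired $\|m\|_{L^2(Q)}\le C\varepsilon^{-1}\|m(\cdot,0)\|_{H^{-1}(\Omega)}$. The step I expect to be the main obstacle is the second one: the absorption goes through only because $u(\cdot,0)$ is controlled by $\|m\|_{L^2(Q)}$ and by no stronger norm of $m$, so the terminal-cost contribution $u_2$ must be handled through the $L^2$-continuity of the action operator (Definition~\ref{improve_regular}) combined with heat smoothing rather than a crude trace inequality, while the source part $u_1$ must be propagated down to $t=0$ with a gain of one full spatial derivative. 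A secondary technical point, routine but requiring care, is justifying the endpoint time-traces and the Green's formula used in the energy identity within the low-regularity space $H_\pm(Q)$, which is done by density of smooth functions.
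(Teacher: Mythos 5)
Your proposal is correct and follows essentially the same route as the paper: the coupled energy identity, dropping the terminal term by the monotonicity of $G$, pairing $u(\cdot,0)$ with $m(\cdot,0)$ in the $H_0^1$--$H^{-1}$ duality, bounding $\|u(\cdot,0)\|_{H_0^1(\Omega)}$ by $C\|m\|_{L^2(Q)}$ via parabolic regularity together with the action-operator property of $G$, and absorbing. The paper simply cites the standard parabolic a-priori estimate where you spell out the decomposition of $u$ into source and terminal parts; that extra detail is harmless (though invoking Lemma~\ref{Carleman 1} for the $u_1$ piece is an unnecessary detour, as maximal regularity for the backward heat equation suffices).
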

\begin{proof}
	Notice that
	\begin{equation}\label{eq:ddd2}
	\begin{split}
		&\int_{\Omega}\quad u_tm+um_t\quad dx\\
		=&\int_{\Omega} (-\Delta u- Fm)m + (\Delta m+\Delta u)u dx\\
		=&\int_{\Omega} -Fm^2-|\nabla u|^2 dx\\
		\leq& \int_{\Omega} -Fm^2 dx. 
	\end{split}
	\end{equation}
	Integrating both sides of \eqref{eq:ddd2} from $0$ to $T$, we can obtain that
	\begin{equation}
		\int_{\Omega} \frac{\delta G}{\delta m}(x,1)(m(x,T))m(x,T)\ dx-\int_{\Omega} u(x,0)m(x,0)\ dx\leq -\int_{Q} Fm^2\ dx.
	\end{equation}
	Since $G\in\mathcal{B}$, the standard a-prior estimate for the parabolic equation (cf. \cite{evans}) yields that
	\begin{equation}\label{control norm of H_+}
	\|u(x,0)\|_{H_0^1(\Omega)}\leq C(\|m\|_{L^2(Q)}+   \|\frac{\delta G}{\delta m}(x,1)(m(x,T) )\|_{L^2(Q)}) .
	\end{equation}
	 Then by the assumption $G\in\mathcal{B}$, we have 
	\begin{align*}
		\varepsilon \|m\|^2_{L^2(Q)}&\leq \int_{\Omega} Fm^2 dx\\
		&\leq \int_{\Omega} u(x,0)m(x,0) dx\\
		&\leq C    (\|m\|_{L^2(Q)}+   \|\frac{\delta G}{\delta m}(x,1)(m(x,T) )\|_{L^2(Q)})     \|m(x,0)\|_{H^{-1}(\Omega)}\\
		&\leq C\|m\|_{L^2(Q)} \|m(x,0)\|_{H^{-1}(\Omega)},
	\end{align*}
which readily proves the statement of the lemma and competes its proof. 
\end{proof}	

	By following a similar argument, one can show that 
	
\begin{lem}\label{prior estimate2}
	Suppose there is $M>\varepsilon>0$ such that $\varepsilon<F(x)<M$.  Let $(u,m)\in \big(L^2(0,T; H_0^1(\Omega))\cap H^1(0,T;H^{-1}(\Omega)\big)\times H_{-}(Q)$ be a solution to the following system
	\begin{equation}\label{prior estimate 2}
		\begin{cases}
			u_t-\Delta u= F(x)m & \text{ in } Q,\medskip\\
			-m_t-\Delta m-\Delta u=0  & \text{ in } Q,\medskip\\
			u(x,t)=m(x,t)=0     & \text{ on } \Sigma,\medskip\\
			u(x,0)= 0 & \text{ in } \Omega.\\
		\end{cases}
	\end{equation}
	we have 
	$$ \|m\|_{L^2(Q)}\leq C\|m(x,T)\|_{H^{-1}(\Omega)}.$$
\end{lem}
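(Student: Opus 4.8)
The plan is to run the energy–duality argument from the proof of Lemma~\ref{prior estimate}, but with the roles of the initial and terminal times interchanged, to account for the reversed time directions in \eqref{prior estimate 2}. First I would establish the differential identity
\begin{equation*}
\frac{d}{dt}\int_{\Omega} u(x,t)\, m(x,t)\, dx \;=\; \int_{\Omega}\big(F(x)\, m^2 + |\nabla u|^2\big)\, dx,
\end{equation*}
obtained by substituting $u_t = \Delta u + F m$ and $m_t = -\Delta m - \Delta u$ from the first two equations of \eqref{prior estimate 2}, then integrating by parts twice in the spatial variables: the homogeneous Dirichlet conditions $u=m=0$ on $\Sigma$ kill the boundary terms and force the cancellation $\int_{\Omega}\big((\Delta u)\,m - u\,\Delta m\big)\,dx = 0$, while $-\int_{\Omega} u\,\Delta u\, dx = \int_{\Omega}|\nabla u|^2\, dx$. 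Since this manipulation is only formal for a weak solution with $m\in H_-(Q)$ and $u\in L^2(0,T;H^1_0(\Omega))\cap H^1(0,T;H^{-1}(\Omega))$, I would justify it by approximation, using the density of $C^\infty(Q)$ in $H_\pm(Q)$ together with the Green's formula on $H_\pm(Q)$ recalled above, which simultaneously gives meaning to the traces $m(\cdot,0), m(\cdot,T)\in H^{-1}(\Omega)$ and $u(\cdot,0), u(\cdot,T)\in H^1_0(\Omega)$.

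Integrating the identity over $t\in[0,T]$, discarding the nonnegative term $\int_Q |\nabla u|^2$, and using $u(x,0)=0$ yields
\begin{equation*}
\big\langle u(\cdot,T),\, m(\cdot,T)\big\rangle_{H^1_0(\Omega),\,H^{-1}(\Omega)} \;\geq\; \int_Q F(x)\, m^2\, dx\,dt \;\geq\; \varepsilon\,\|m\|_{L^2(Q)}^2,
\end{equation*}
where the last step uses $F>\varepsilon$. Next I would control $\|u(\cdot,T)\|_{H^1_0(\Omega)}$ by regarding the first equation of \eqref{prior estimate 2} as a forward heat equation $u_t-\Delta u = Fm$ with homogeneous Dirichlet boundary data and zero initial datum $u(\cdot,0)=0$: testing with $-\Delta u$ (equivalently, the standard parabolic energy estimate, cf. \cite{evans}) gives $\sup_{t\in[0,T]}\|\nabla u(\cdot,t)\|_{L^2(\Omega)}^2 \leq C\|Fm\|_{L^2(Q)}^2 \leq C M^2 \|m\|_{L^2(Q)}^2$, hence $\|u(\cdot,T)\|_{H^1_0(\Omega)}\leq CM\,\|m\|_{L^2(Q)}$. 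Combining this with the previous display and the duality bound $\langle u(\cdot,T),m(\cdot,T)\rangle \leq \|u(\cdot,T)\|_{H^1_0(\Omega)}\,\|m(\cdot,T)\|_{H^{-1}(\Omega)}$ produces
\begin{equation*}
\varepsilon\,\|m\|_{L^2(Q)}^2 \;\leq\; C M\,\|m\|_{L^2(Q)}\,\|m(\cdot,T)\|_{H^{-1}(\Omega)},
\end{equation*}
and dividing by $\|m\|_{L^2(Q)}$ (the case $m\equiv 0$ being trivial) gives the asserted estimate with constant $CM/\varepsilon$.

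I expect the only genuine obstacle to be the rigorous justification of the time integration-by-parts and of the associated trace/boundary terms at the low regularity $m\in H_-(Q)$; this is precisely what the density of $C^\infty(Q)$ in $H_\pm(Q)$ and the Green's formula on $H_\pm(Q)$ are designed to provide, so no idea beyond the proof of Lemma~\ref{prior estimate} is needed. It is worth noting that, in contrast to Lemma~\ref{prior estimate}, no hypothesis on $G$ enters here, because the terminal condition on $u$ has been replaced by the homogeneous initial condition $u(\cdot,0)=0$.
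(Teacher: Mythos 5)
Your proposal is correct and is essentially the argument the paper intends: Lemma~\ref{prior estimate2} is stated there with only the remark that it follows ``by a similar argument'' to Lemma~\ref{prior estimate}, and your time-reversed version of that duality/energy identity, together with the forward parabolic estimate for $\|u(\cdot,T)\|_{H_0^1(\Omega)}$ replacing the bound \eqref{control norm of H_+} on $\|u(\cdot,0)\|_{H_0^1(\Omega)}$, is exactly that adaptation. Your closing observation that no hypothesis on $G$ is needed here is also consistent with the paper's statement of the lemma.
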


	\subsection{Construction of CGO solutions}
	
	In this subsection, we focus on constructing certain solutions of a special form for the linearized systems, which shall serve as the ``probing modes" for our inverse problem. Those special solutions are referred to as the CGO (complex geometric optic) solutions. 
	
Next, we consider the weighted Hilbert space $L^2(Q; e^{2\lambda^2t})$ and $L^2(Q; e^{-2\lambda^2t}) $ with the scalar products 
	$$\left<u,v\right>_{\lambda^2}=\int_Q\quad u(x,t)v(x,t) e^{2\lambda^2t } \quad dxdt,$$
	$$\left<u,v\right>_{-\lambda^2}=\int_Q\quad u(x,t)v(x,t) e^{-2\lambda^2t } \quad dxdt,$$
	respectively. Similarly, we define $L^2(\Sigma; e^{2\lambda^2t})$ and $L^2(\Sigma; e^{-2\lambda^2t})$.

\begin{thm}\label{construct CGO 1}
	Consider the system
	\begin{equation}\label{CGO}
		\begin{cases}
			-u_t-\Delta u= F(x)m & \text{ in } Q,\medskip\\
			m_t-\Delta m-\Delta u=0  & \text{ in } Q,\medskip\\
			m(x,t)=u(x,t)=0  & \text{ on } \Sigma,\medskip\\
			\displaystyle{u(x,T)=\frac{\delta G}{\delta m}(x,1)(m(x,T)) }    & \text{ in } \Omega. \\
		\end{cases}
	\end{equation}
	Assume that $F$ is uniformly bounded by $M>0$ and $G\in\mathcal{B}$.  Let $\theta_{+}=1-e^{-\lambda^{3/4}t}$ where $\lambda$ is large enough (depending only on $M$ and $\Omega$). Then we have that:
	\begin{enumerate}
	\item[(i)] the system $\eqref{CGO}$ has a solution in the form 
	$$(u,m)=\Big(u,  e^{-\lambda^2t-\lambda\mathrm{i}x\cdot\xi}(\theta_{+}  e^{-\mathrm{i}(x,t)\cdot (\eta,\tau)}+ w_{+}(x,t) )\Big),$$
	such that $\xi,\eta\in\mathbb{S}^{n-1}$, $\xi\cdot\eta=0$, $\tau\in\mathbb{R}$ and 
	\begin{equation}
		\begin{aligned}
			&\lim\limits_{\lambda\to\infty} \|w_{+}\|_{L^2(Q)}=0.
		\end{aligned}
	\end{equation}
	Here and also in what follows, $\mathrm{i}:=\sqrt{-1}$ is the imaginary unit. 
	
	\item[(ii)] the solution $(u,m)$ in (i) satisfies $u\in L^2(0,T; H_0^1(\Omega))\cap H^1(0,T;H^{-1}(\Omega))$ and $m(x,t)\in H_{+}.$
	\end{enumerate}
\end{thm}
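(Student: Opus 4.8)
My plan is to eliminate $u$, conjugate the resulting equation for $m$ by the complex weight $e^{-\lambda^2 t-\lambda\mathrm i x\cdot\xi}$, and construct the remainder $w_+$ by a Hahn--Banach/duality argument built on the Carleman estimates of Lemmas~\ref{Carleman 1}--\ref{Carleman 2} and the a priori bounds of Lemmas~\ref{prior estimate}--\ref{prior estimate2}. First I would eliminate $u$: for a given $m$, the first line of \eqref{CGO} is the backward heat equation $-u_t-\Delta u=F(x)m$ with $u|_\Sigma=0$ and the nonlocal terminal condition $u(x,T)=\frac{\delta G}{\delta m}(x,1)(m(x,T))$, which determines a unique $u=:\mathcal S m$ --- the terminal datum being meaningful and controlled because $G\in\mathcal B$ is an action operator at $m=1$ and the $H_\pm$ trace theory recalled above applies. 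Substituting $u=\mathcal S m$ into the second line reduces \eqref{CGO} to the single equation $(\partial_t-\Delta)m=\Delta\mathcal S m$ in $Q$ with $m|_\Sigma=0$, in which $\Delta\mathcal S$ is a (nonlocal) perturbation of $-\Delta$.

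Next I insert the ansatz $m=e^{-\lambda^2 t-\lambda\mathrm i x\cdot\xi}(\theta_+q+w_+)$ with $q:=e^{-\mathrm i(x,t)\cdot(\eta,\tau)}$. Since $e^{-\lambda^2 t-\lambda\mathrm i x\cdot\xi}$ solves the heat equation exactly, conjugation turns $\partial_t-\Delta$ into $P_\lambda:=\partial_t-\Delta+2\lambda\mathrm i\,\xi\cdot\nabla$: the two $\lambda^2$-contributions cancel, and only the large first-order term survives. The orthogonality $\xi\cdot\eta=0$ then forces $P_\lambda$ to annihilate $q$, and one computes $P_\lambda(\theta_+q)=(\theta_+'+(1-\mathrm i\tau)\theta_+)q$; thus the leading CGO term solves the conjugated equation up to an error of size $O(\lambda^{3/8})$ in $L^2(Q)$, the worst part coming from $\theta_+'=\lambda^{3/4}e^{-\lambda^{3/4}t}$. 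The cutoff $\theta_+=1-e^{-\lambda^{3/4}t}$ is included so that $\theta_+(0)=0$ --- a compatibility at $t=0$ needed when one integrates by parts against the Carleman test functions --- while the fractional exponent $3/4$ keeps this error strictly below the $\lambda^2$-gain the Carleman estimates will provide.

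It then remains to solve $(P_\lambda-\mathcal R_\lambda)w_+=-(P_\lambda-\mathcal R_\lambda)(\theta_+q)$ in $Q$ with $w_+|_\Sigma=-\theta_+q|_\Sigma$, where $\mathcal R_\lambda w:=e^{\lambda^2 t+\lambda\mathrm i x\cdot\xi}\Delta\mathcal S(e^{-\lambda^2 t-\lambda\mathrm i x\cdot\xi}w)$ is the conjugate of the coupling term; I would do this by the usual duality scheme. In the weighted spaces $L^2(Q;e^{\mp2\lambda^2 t})$ introduced above, Lemmas~\ref{Carleman 1}--\ref{Carleman 2} (covering the backward and forward heat operators and their adjoints) supply Carleman-type a priori estimates with a $\lambda^2$-gain plus the usual control of the normal derivative on $\Sigma$, and Lemmas~\ref{prior estimate}--\ref{prior estimate2}, read off in the same weighted norm, show that $\mathcal R_\lambda$ lands in the regime where the Carleman gain dominates and can therefore be absorbed. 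A Hahn--Banach extension and the Riesz representation theorem then produce $w_+$ with $\|w_+\|_{L^2(Q)}\to0$ as $\lambda\to\infty$ (the $O(1)$ Dirichlet datum on $\Sigma$ enters only through a thin boundary layer whose interior $L^2$-mass vanishes), which proves (i). For (ii), with $m\in L^2(Q)$ in hand, $u=\mathcal S m$ solves a backward heat problem with right-hand side $Fm\in L^2(Q)$ and, via $G\in\mathcal B$, an admissible terminal datum, hence $u\in L^2(0,T;H^1_0(\Omega))\cap H^1(0,T;H^{-1}(\Omega))$ by standard parabolic estimates; reading the second equation back then gives $(\partial_t-\Delta)m=\Delta\mathcal S m\in L^2(Q)$, i.e. $m\in H_+(Q)$.

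The step I expect to be the main obstacle is the construction of $w_+$, specifically the treatment of $\mathcal R_\lambda$: for a single-equation CGO construction the conjugated operator is a harmless compact perturbation of the heat operator, whereas here $\mathcal R_\lambda$ encodes the inversion of a backward heat equation carrying the nonlocal terminal condition $\delta G/\delta m$, of the same differential order as the surviving principal part, so establishing that it does not destroy the Carleman gain is precisely where the a priori estimates of Lemmas~\ref{prior estimate}--\ref{prior estimate2} and a delicate choice of weighted norms become indispensable. A secondary nuisance is that the ansatz forces $w_+$ to carry an $O(1)$ Dirichlet trace on $\Sigma$, so that only its interior $L^2$-norm can be made to vanish and the induced boundary layer must be controlled uniformly in $\lambda$.
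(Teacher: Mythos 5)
Your proposal follows essentially the same route as the paper: the same ansatz and conjugation, the same role for $\theta_+$ and the exponent $3/4$, and the same duality scheme (Carleman estimate of Lemma~\ref{Carleman 1} plus Hahn--Banach and Riesz representation) to produce the remainder. The only organizational difference is that you eliminate $u$ first via the solution operator $\mathcal S$ and absorb the conjugated coupling $\mathcal R_\lambda$ as a small perturbation, whereas the paper keeps the pair $(u,\rho)$ and runs a Banach fixed-point iteration on $\rho$ in the weighted space $J=\{\rho: e^{\lambda^2 t}\rho\in H_+\}$, solving the backward equation for $u$ and then the forward equation \eqref{define M} by duality at each step; these are equivalent (absorbing an $O(\lambda^{-2})$ perturbation is the contraction estimate \eqref{final app}).

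One substantive correction to your plan: the smallness of the coupling does \emph{not} come from Lemmas~\ref{prior estimate}--\ref{prior estimate2} ``read off in weighted norms.'' Those are unweighted bounds of $\|m\|_{L^2(Q)}$ by $H^{-1}$ data and carry no gain in $\lambda$; in the paper they are used only later, for the Runge-type approximation. The estimate you actually need is the paper's \eqref{car corollary}--\eqref{app of u}: setting $v=e^{\lambda^2 t}u$, the conjugated backward equation acquires the zeroth-order term $+\lambda^2 v$, and a plain energy estimate (using $G\in\mathcal B$ to control the terminal term $\int_\Omega v(x,T)^2$) gives $\|u\|_{L^2(Q;e^{2\lambda^2 t})}\le C\lambda^{-2}\|\rho\|_{L^2(Q;e^{2\lambda^2 t})}$. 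This $\lambda^{-2}$ bound on the map $\rho\mapsto \Delta u$ (tested against $\Delta f$ in the dual pairing) is what lets the Carleman gain dominate $\mathcal R_\lambda$; you correctly identified this as the crux but pointed at the wrong lemmas for it, so you would need to supply this weighted energy estimate explicitly. A small further remark: the Carleman estimate's boundary terms require the dual test functions to vanish at $t=T$, and $\theta_+(0)=0$ is there chiefly so that the leading CGO profile contributes nothing to $m(x,0)$; neither point affects the validity of your argument.
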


\begin{proof}

	(i).~Define  $\rho=e^{-\lambda^2t- \lambda\mathrm{i}x\cdot\xi} w_{+}:=\psi w_{+}.$
	It can be directly verified that $(u,\rho)$ is a solution of the following system:
	\begin{equation}\label{CGO'}
		\begin{cases}
			-u_t-\Delta u-F\rho = F(x)\theta_{+}\psi e^{-\mathrm{i}(x,t)\cdot (\eta,\tau)} & \text{ in } Q,\medskip\\
			\rho_t-\Delta\rho-\Delta u=-\psi e^{-\mathrm{i}(x,t)\cdot (\eta,\tau)}[\lambda^{3/4}e^{-\lambda^{3/4}t}-\mathrm{i}\theta_{+}\tau-\theta_{+}]  & \text{ in } Q,\medskip\\
			u(x,t)=0 & \text{ on } \Sigma,\medskip\\
			\rho(x,t)=e^{-\lambda^2t-\lambda\mathrm{i}x\cdot\xi}(\theta_{+}  e^{-\mathrm{i}(x,t)\cdot (\eta,\tau)} ) & \text{ on } \Sigma,\medskip\\
			\displaystyle{u(x,T)= \frac{\delta G}{\delta m}(x,1)(m(x,T))}        & \text{ in } \Omega. \\
		\end{cases}
	\end{equation}

	Next, we define a map $\mathcal{M}$ form $ J:= \{\rho : e^{\lambda^2t}\rho\in H_{+}(\Omega) \} $ to itself as follows. 	Let $\rho\in J\subset L^2(Q; e^{2\lambda^2t}) $ and $u$ be the solution of the system:
	\begin{equation}\label{define M pre}
		\begin{cases}
			-u_t-\Delta u-F\rho = F(x)\theta_{+}\psi e^{-\mathrm{i}(x,t)\cdot (\eta,\tau)} & \text{ in } Q,\medskip\\
			u(x,t)=0   & \text{ on } \Sigma, \medskip\\
			\displaystyle{u(x,T)=\frac{\delta G}{\delta m}(x,1)\Big(e^{-\lambda^2T-\lambda\mathrm{i}x\cdot\xi}\theta_{+}(T)  e^{-\mathrm{i}(x,T)\cdot (\eta,\tau)}+ \rho(x,T)\Big)} & \text{ in } \Omega.\\
		\end{cases}
	\end{equation}	
	Then we consider the following system
	\begin{equation}\label{define M}
		\begin{cases}
			(\mathcal{M}(\rho))_t-\Delta(\mathcal{M}(\rho))-\Delta u=-\psi e^{-\mathrm{i}(x,t)\cdot (\eta,\tau)}[\lambda^{3/4}e^{-\lambda^{3/4}t}-\mathrm{i}\theta_{+}\tau-\theta_{+}]  & \text{ in } Q, \medskip\\ 
			(\mathcal{M}(\rho))(x,t)= e^{-\lambda^2t-\lambda\mathrm{i}x\cdot\xi}(\theta_{+}  e^{-\mathrm{i}(x,t)\cdot (\eta,\tau)} )& \text{ on } \Sigma. \\
		\end{cases}
	\end{equation}
	Let 
	\[
	H(x,t)= -\psi e^{-\mathrm{i}(x,t)\cdot (\eta,\tau)}[\lambda^{3/4}e^{-\lambda^{3/4}t}-\mathrm{i}\theta_{+}\tau-\theta_{+}],
	\]
	and 
	\[
	h(x,t)=e^{-\lambda^2t-\lambda\mathrm{i}x\cdot\xi}(\theta_{+}  e^{-\mathrm{i}(x,t)\cdot (\eta,\tau)} ).
	\]
	Next, we define a map $\mathcal{L}:X  \to \mathbb{R}$, where $$ X:=\{( -\p_t-\Delta)f : f(x,t)\in C_0^{2}(\overline{Q}) \},$$
	by 
	\[
	\mathcal{L}(( -\p_t-\Delta)f)=\left< f,\Delta u+H(x,t) \right>_0-\left<\p_{\nu}f,h(x,t)\right>_0.
	\]

	By Lemma $\ref{Carleman 1}$, we have
	\begin{equation}\label{bouned map}
		\begin{aligned}
			&| \mathcal{L}(( -\p_t-\Delta)f)|\\
			=&|\left< f,H(x,t) \right>-\left<\p_{\nu}f,h(x,t)\right>|\\
			\leq& \|f\|_{L^2(Q;e^{-2\lambda^2t})}\|H\|_{L^2(Q;e^{2\lambda^2t}) }+  \|\Delta f\|_{L^2(Q;e^{-2\lambda^2t})}\| u\|_{L^2(Q;e^{2\lambda^2t}) }\\
			&+\|\p_{\nu}f \|_{L^2(\Sigma;e^{-2\lambda^2t})} \|h\|_{L^2(\Sigma;e^{2\lambda^2t}) }\\
			\leq & C \Big[\lambda^{-2}\|H\|_{L^2(Q;e^{2\lambda^2t}) }+\lambda^{-1}\|h\|_{L^2(\Sigma;e^{2\lambda^2t})}+\| u\|_{L^2(Q;e^{2\lambda^2t}) } \Big]\\
			&\times \|( -\p_t-\Delta)f)\|_{L^2(Q;e^{-2\lambda^2t})}.
		\end{aligned}
	\end{equation}
	Notice that $\eqref{define M pre}$ can be rewritten as
\begin{equation}\label{define M pre'}
	\begin{cases}
		-(e^{\lambda^2t}u)_t-\Delta(e^{\lambda^2t}u)+\lambda^2(e^{\lambda^2t}u)-F(\rho e^{\lambda^2t} )= F(x)\theta_{+}e^{- \lambda\mathrm{i}\xi\cdot x} e^{-\mathrm{i}(x,t)\cdot (\eta,\tau)}  & \text{ in } Q,\medskip\\
		(e^{\lambda^2t}u)(x,t)=0   & \text{ on } \Sigma,\medskip\\
		\displaystyle{(e^{\lambda^2t}u)(x,T)=\frac{\delta G}{\delta m}(x,1)\Big( e^{- \lambda\mathrm{i}\xi\cdot x}e^{-\mathrm{i}(x,T)\cdot (\eta,\tau)}\theta_{+}(T) +e^{\lambda^2T}\rho(x,T) \Big)} & \text{ in } \Omega.\\
	\end{cases}
\end{equation}
	Considering the system \eqref{define M pre'} and letting $v= e^{\lambda^2t}u$, we have 
	$ v\in L^2(0,T; H_0^1(\Omega))\cap H^1(0,T;H^{-1}(\Omega))$ and
	\begin{equation}\label{car corollary}
		\begin{aligned}
			\lambda^2 \int_{Q} v^2 dxdt &=\int_{Q} \quad \Big(F(\rho e^{\lambda^2t} )+F(x)\theta_{+}e^{- \lambda\mathrm{i}\xi\cdot x}e^{-\mathrm{i}(x,t)\cdot (\eta,\tau)} \Big)v + v_tv+\Delta v v \quad dxdt\\
			&=\int_{Q} \Big(F(\rho e^{\lambda^2t} )+F(x)\theta_{+}e^{- \lambda\mathrm{i}\xi\cdot x}e^{-\mathrm{i}(x,t)\cdot (\eta,\tau)} \Big)v + \frac{1}{2}\frac{d}{dt}v^2-|\nabla v|^2 dxdt\\
			&\leq \|  F(\rho e^{\lambda^2t} )+F(x)\theta_{+}e^{- \lambda\mathrm{i}\xi\cdot x}   \|_{L^2(Q)}\|v\|_{L^2(Q)}+\frac{1}{2}\int_{\Omega} v(x,T)^2 dx\\
			&\leq C\|\rho\|_{L^2(Q;e^{2\lambda^2t })}.
		\end{aligned}
	\end{equation}
	Hence, 
	\begin{equation}\label{app of u}
		\| u\|_{L^2(Q;e^{2\lambda^2t})}\leq C\lambda^{-2}\|\rho\|_{L^2(Q;e^{2\lambda^2t })}.
	\end{equation}
	It follows that $\mathcal{L}$ is bounded in $X$. By Hahn Banach's extension theorem, $\mathcal{L}$ can be extended to a bounded linear map  on $L^2(Q; e^{-2\lambda^2t})$. Hence, there exisits $p\in L^2(Q; e^{2\lambda^2t})$ such that
	$$\left< f,\Delta u+H(x,t) \right>_0-\left<\p_{\nu}f,h(x,t)\right>_0= \left<( -\p_t-\Delta)f)  ,p \right>_{0}, $$
	and
	\begin{equation}\label{risze}
		\begin{aligned}
			\|p\|_{L^2(Q; e^{2\lambda^2t})} &\leq   C \Big[\lambda^{-2}\|H\|_{L^2(Q;e^{2\lambda^2t}) }+\lambda^{-1}\|h\|_{L^2(\Sigma;e^{2\lambda^2t})}+\| u\|_{L^2(Q;e^{2\lambda^2t})} \Big]\\
			&\leq  C \Big[\lambda^{-1/2}+\lambda^{-1}+\| u\|_{L^2(Q;e^{2\lambda^2t})} \Big].
		\end{aligned}
	\end{equation} 
	Therefore, $p$ is a solution of $\eqref{define M}$ and we define 
	$$\mathcal{M}(\rho)=p.$$
	Now, $\mathcal{M}$ is a map from $J$ to itself.
	
	Let $\rho_1,\rho_2\in J$. By a subtraction, we have
	\begin{equation}
		\begin{cases}
			(\mathcal{M}(\rho_1)- \mathcal{M}(\rho_2))_t-\Delta(\mathcal{M}(\rho_1)- \mathcal{M}(\rho_2))-\Delta (u_1-u_2)=0 & \text{ in } Q,\medskip\\
			(\mathcal{M}(\rho_1)- \mathcal{M}(\rho_2))(x,t)= 0& \text{ on } \Sigma,\\
		\end{cases}
	\end{equation}
	where $u_1-u_2$ satisfies
	\begin{equation}
		\begin{cases}
			-(u_1-u_2)_t-\Delta (u_1-u_2)-F(\rho_1-\rho_2) =0 & \text{ in } Q,\medskip\\
			(u_1-u_2)(x,t)=0   & \text{ on } \Sigma, \medskip\\
			(u_1-u_2)(x,T)=(\rho_1-\rho_2) (x,T))  & \text{ in } \Omega.\\
		\end{cases}
	\end{equation}
	Then by the definition of $\mathcal{M}(\rho_1)$, $\mathcal{M}(\rho_2)$ and simialr arguement above, we have for any $(-\p_t-\Delta)f\in X$, 
	\begin{equation}
		\begin{aligned}
			&|\left< (-\p_t-\Delta)f, \mathcal{M}(\rho_1)- \mathcal{M}(\rho_2)\right>_0|\\
			=&|\left<f,\Delta (u_1-u_2)\right>_0|\\
			\leq&  C \Big(\lambda^{-2}\|\rho_1-\rho_2\|_{L^2(Q; e^{2\lambda^2t}) } \Big)\|( -\p_t-\Delta)f)\|_{L^2(Q;e^{-2\lambda^2t})}.
		\end{aligned}
	\end{equation}
	It follows that
	\begin{equation}\label{final app}	
		\|\mathcal{M}(\rho_1)- \mathcal{M}(\rho_2)\|_{L^2(Q; e^{2\lambda^2t})}		
		\leq  C \Big(\lambda^{-2}\|\rho_1-\rho_2\|_{L^2(Q; e^{2\lambda^2t}) } \Big). 
	\end{equation}
	Therefore, $\mathcal{M}$ is  a contraction mapping  if  $\lambda$  is large enough. By Banach's Fixed Point Theorem, there exists $\rho\in J$ such that $ \mathcal{M}(\rho)=\rho.$  Clearly, it is a solution of the system \eqref{CGO'}.

	Finally, by combining \eqref{risze} and \eqref{app of u}, we have that
	\begin{equation}\label{final app'}
		\|\rho\|_{L^2(Q; e^{2\lambda^2t})}
		\leq C \Big(\lambda^{-1/2}+\lambda^{-1} \Big)\frac{\lambda^2-C}{\lambda^2}.
	\end{equation}
	Therefore, it holds that
	\begin{equation*}
		\begin{aligned}
			&\lim\limits_{\lambda\to\infty} \|w_{+}\|_{L^2(Q)}=\lim\limits_{\lambda\to\infty}\|\rho\|_{L^2(Q; e^{2\lambda^2t})}= 0.
		\end{aligned}
	\end{equation*}
	
	\bigskip

	\noindent (ii).~Since $m\in H_{+}$ and $G\in \mathcal{B}$, we have 
$\frac{\delta G}{\delta m}(x,1)(m(x,T))  \in  L^2(\Omega)$. Hence,
	$u\in L^2(0,T; H_0^1(\Omega))\cap H^1(0,T;H^{-1}(\Omega))$.
	
	The proof is complete. 
\end{proof}

By following similar arguments in the previous theorem, we can derive a similar construction for the adjoint system of $\eqref{CGO}$. In fact, we have

\begin{thm}\label{CGO2}
	Consider the system
	\begin{equation}\label{CGO-}
		\begin{cases}
			u_t-\Delta u= F(x)m & \text{ in } Q, \medskip\\
			-m_t-\Delta m-\Delta u=0  & \text{ in } Q,\medskip\\
			u(x,t)=m(x,t)=0     & \text{ on } \Sigma, \medskip\\
			u(x,0)=  0& \text{ in } \Omega.
		\end{cases}
	\end{equation}
	Assume $F$ is uniformly bounded by $M>0$. Let $\theta_{-}=1-e^{-\lambda^{3/4}(T-t)}$. Then we have that 
	 
	 \begin{enumerate}
	\item[(i)] the system $\eqref{CGO-}$ has a solution in the form 
	$$(u,m)=\Big(u,  e^{\lambda^2t+\lambda\mathrm{i}\xi\cdot x}(\theta_{-}e^{\mathrm{i}(x,t)(\eta,\tau)}+ w_{-}(x,t) )\Big),$$
	such that $\xi,\eta\in\mathbb{S}^{n-1}$, $\xi\cdot\eta=0$, $\tau\in\mathbb{R}$ and 
	\begin{equation}
		\begin{aligned}
			&\lim\limits_{\lambda\to\infty} \|w_{-}\|_{L^2(Q)}=0.
		\end{aligned}
	\end{equation}

\item[(ii)] the solution $(u,m)$ in (i) satisfies $u\in L^2(0,T; H_0^1(\Omega))\cap H^1(0,T;H^{-1}(\Omega))$ and $m\in H_{-}(\Omega).$
\end{enumerate}

\end{thm}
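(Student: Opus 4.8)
The plan is to rerun the proof of Theorem~\ref{construct CGO 1} with the forward and backward heat operators interchanged and with Lemma~\ref{Carleman 2} used in place of Lemma~\ref{Carleman 1}; part~(ii) turns out to be easier here because the condition $u(x,0)=0$ replaces the terminal condition involving $G$. First I would fix $\xi,\eta\in\mathbb{S}^{n-1}$ with $\xi\cdot\eta=0$ and $\tau\in\mathbb{R}$, set $\psi:=e^{\lambda^2t+\lambda\mathrm{i}\xi\cdot x}$, and seek $m$ in the stated form $m=\psi\big(\theta_{-}e^{\mathrm{i}(x,t)\cdot(\eta,\tau)}+w_{-}\big)$. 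Writing $\rho:=\psi w_{-}$, a direct computation --- using $|\xi|=|\eta|=1$, $\xi\cdot\eta=0$ and the identity $(-\p_t-\Delta)\psi=0$, which is exactly why this weight is chosen since it cancels the $\lambda^2$ terms --- shows that $(u,\rho)$ solves a system of the same shape as \eqref{CGO'}: the forward parabolic equation
$$u_t-\Delta u-F\rho=F(x)\theta_{-}\psi\,e^{\mathrm{i}(x,t)\cdot(\eta,\tau)}\quad\text{in }Q,\qquad u=0\ \text{on }\Sigma,\qquad u(x,0)=0,$$
together with the backward parabolic equation $-\rho_t-\Delta\rho-\Delta u=H$ in $Q$ with $\rho=h$ on $\Sigma$, where $h=\psi\theta_{-}e^{\mathrm{i}(x,t)\cdot(\eta,\tau)}|_{\Sigma}$ and $H=\psi\,e^{\mathrm{i}(x,t)\cdot(\eta,\tau)}\big[(\mathrm{i}\tau-1)\theta_{-}-\lambda^{3/4}e^{-\lambda^{3/4}(T-t)}\big]$ collects the lower-order terms coming from $\p_t\theta_{-}$ and $\p_t e^{\mathrm{i}(x,t)\cdot(\eta,\tau)}$.

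Next I would set up the fixed point argument in $J:=\{\rho:\ e^{-\lambda^2t}\rho\in H_{+}(Q)\}$. Given $\rho\in J$, solve the forward equation for $u$ by standard parabolic theory; testing $v:=e^{-\lambda^2t}u$ against itself and using $v(x,0)=0$ yields the analogue of \eqref{app of u}, namely $\|u\|_{L^2(Q;e^{-2\lambda^2t})}\le C\lambda^{-2}\big(\|\rho\|_{L^2(Q;e^{-2\lambda^2t})}+1\big)$. Then I would define $\mathcal{M}(\rho)$ as the solution $p$ of $-p_t-\Delta p-\Delta u=H$, $p=h$ on $\Sigma$, built by duality exactly as in the proof of Theorem~\ref{construct CGO 1}: on $X:=\{(\p_t-\Delta)f:\ f\in C_0^2(\overline{Q})\}$ define $\mathcal{L}\big((\p_t-\Delta)f\big):=\langle f,\Delta u+H\rangle_0-\langle\p_{\nu}f,h\rangle_0$, and use Lemma~\ref{Carleman 2} together with Cauchy--Schwarz in the weighted spaces to bound $|\mathcal{L}((\p_t-\Delta)f)|$ by $C\big[\lambda^{-2}\|H\|+\lambda^{-1}\|h\|+\|u\|\big]$ times the relevant weighted norm of $(\p_t-\Delta)f$; Hahn--Banach extension and the Riesz representation then furnish $\mathcal{M}(\rho)=p$ in the corresponding weighted $L^2$ space with the matching norm estimate. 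Subtracting two instances gives $\|\mathcal{M}(\rho_1)-\mathcal{M}(\rho_2)\|_{L^2(Q;e^{-2\lambda^2t})}\le C\lambda^{-2}\|\rho_1-\rho_2\|_{L^2(Q;e^{-2\lambda^2t})}$, so $\mathcal{M}$ is a contraction once $\lambda$ is large, and Banach's fixed point theorem produces $\rho$ with $\mathcal{M}(\rho)=\rho$, i.e. a solution of the transformed system. Combining the two estimates and noting that $\|H\|$ and $\|h\|$ in the relevant weighted norms grow at most polynomially in $\lambda$ with exponent strictly below $2$, one concludes $\|\rho\|_{L^2(Q;e^{-2\lambda^2t})}\to0$, whence $\|w_{-}\|_{L^2(Q)}=\|\rho\|_{L^2(Q;e^{-2\lambda^2t})}\to0$ as $\lambda\to\infty$; this is~(i).

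For~(ii), since $m\in L^2(Q)$ and $F$ is bounded, the source $F(x)m$ of the forward equation for $u$ lies in $L^2(Q)$; with $u(x,0)=0$ and $u=0$ on $\Sigma$, standard parabolic regularity gives $u\in L^2(0,T;H^2(\Omega)\cap H_0^1(\Omega))\cap H^1(0,T;L^2(\Omega))$, so in particular $u\in L^2(0,T;H_0^1(\Omega))\cap H^1(0,T;H^{-1}(\Omega))$ and $\Delta u\in L^2(Q)$. The second equation of \eqref{CGO-} then gives $(-\p_t-\Delta)m=\Delta u\in L^2(Q)$, hence $m\in H_{-}(Q)$. I expect the main obstacle to be the bookkeeping in the duality step: checking that $\mathcal{L}$ is well defined on $X$ (which uses $f=0$ on $\Sigma$ together with $u=m=0$ on $\Sigma$ to discard the boundary contributions not already carried by $h$, and which also pins down the value of $\mathcal{M}(\rho)$ at $t=T$), and matching the weighted $L^2$ norms correctly --- which weight accompanies $(\p_t-\Delta)f$ and which the data $H,h,\Delta u$ --- so that both the contraction constant and the final bound on $\|\rho\|$ tend to zero, exactly as in the proof of Theorem~\ref{construct CGO 1}.
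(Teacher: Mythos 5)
Your proposal is correct and follows exactly the route the paper intends: the paper gives no separate proof of Theorem \ref{CGO2}, stating only that it follows "by similar arguments" to Theorem \ref{construct CGO 1}, and your adaptation (conjugating by the weight $e^{\lambda^2 t+\lambda\mathrm{i}\xi\cdot x}$ annihilated by $-\partial_t-\Delta$, computing the remainder source $H$ from $\theta_-$, running the duality/Hahn--Banach construction with Lemma \ref{Carleman 2} in place of Lemma \ref{Carleman 1}, and closing with the contraction in the weight $e^{-2\lambda^2 t}$) is precisely that argument with the time directions reversed. Your observation that part (ii) simplifies because $u(x,0)=0$ replaces the terminal condition involving $\delta G/\delta m$ is also consistent with the paper's statement, which drops the hypothesis $G\in\mathcal{B}$ here.
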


Now we can construct a sequence of solutions $(u,m)\in \big(L^2(0,T; H_0^1(\Omega))\cap H^1(0,T;H^{-1}(\Omega)\big)\times H_{\pm}(Q)$ to the linearized systems. However, we need consider the forward problem in H\"older spaces. So, we need the following approximation property
 in the proof of Theorem $\ref{der F}$ in what follows.
 
\begin{lem}\label{Runge approximation}
	Suppose there is $M>\varepsilon>0$ such that $\varepsilon<F(x)<M$. For any solution $ (u,m)\in  \big(L^2(0,T; H_0^1(\Omega))\cap H^1(0,T;H^{-1}(\Omega)\big)\times H_{+}(Q)$ to 
	\begin{equation}\label{Runge 1}
		\begin{cases}
			-u_t-\Delta u= F(x)m & \text{ in } Q,\medskip\\
			m_t-\Delta m-\Delta u=0  & \text{ in } Q, \medskip\\
			u(x,t)=m(x,t)=0     & \text{ on } \Sigma, \medskip\\
			\displaystyle{u(x,T)=\frac{\delta G}{\delta m}(x,1)(m(x,T) ) }   & \text{ in } \Omega,\\
		\end{cases}
	\end{equation}
	and any $\eta>0$, there exists a solution  $(\hat{u},\hat{m})\in\Big[ C^{1+\frac{\alpha}{2},2+\alpha}(\overline{Q})\Big]^2$ to $\eqref{Runge 1}$ such that
	$$\|m-\hat{m}\|_{L^2(Q)}\leq \eta. $$
\end{lem}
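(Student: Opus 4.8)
The plan is to establish this as a Runge-type approximation result via a Hahn--Banach/duality argument, exploiting that $\Omega$ sits strictly inside $\Omega'$. First I would set up the relevant solution spaces: let $\mathcal{S}$ denote the $L^2(Q)$-closure of the set of second components $\hat m$ coming from $C^{1+\alpha/2,2+\alpha}(\overline Q)$-solutions of \eqref{Runge 1}, and let $m$ be the second component of an arbitrary $H_+(Q)$-solution. It suffices to show $m$ lies in $\mathcal{S}$. By Hahn--Banach, if not, there is a functional $\varphi\in L^2(Q)$ annihilating every smooth-solution second component but with $\langle\varphi,m\rangle_{L^2(Q)}\neq 0$. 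The strategy is to feed $\varphi$ as a source into the adjoint system — which is precisely the backward system of the type in Lemma \ref{prior estimate2} / Theorem \ref{CGO2} — producing a dual pair $(p,q)$, and then show that the orthogonality against all smooth solutions forces the Cauchy data of $(p,q)$ on $\Sigma$ (and on the free region $Q'\setminus Q$) to vanish, so that by a unique continuation argument $(p,q)\equiv 0$ on $Q$, contradicting $\langle\varphi,m\rangle\neq 0$.

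Concretely, the key steps in order: (1) Write down the adjoint system to \eqref{Runge 1} with right-hand side $\varphi$; because the MFG-linearized operator couples a backward heat equation for $u$ with a forward heat equation for $m$, its formal adjoint couples a forward equation with a backward one, and the a-priori estimate of Lemma \ref{prior estimate2} guarantees a well-defined $H_\mp$-solution $(p,q)$ depending continuously on $\varphi$. (2) For any $C^{1+\alpha/2,2+\alpha}$-solution $(\hat u,\hat m)$ of \eqref{Runge 1} defined on the larger cylinder $Q'$ (such solutions exist and are dense by Lemma \ref{linear app unique}, and restrict to genuine smooth solutions on $Q$), integrate by parts over $Q$ to pair the adjoint system against $(\hat u,\hat m)$. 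The interior terms collapse to $\langle\varphi,\hat m\rangle_{L^2(Q)}=0$ by hypothesis, leaving only boundary contributions on $\Sigma$ and time-slice terms. (3) Since $\hat u,\hat m$ and their normal derivatives can be prescribed essentially arbitrarily on $\Sigma$ by choosing the extension into $Q'\setminus Q$ freely (this is exactly where the proper-subdomain assumption is used, via Lemma \ref{linear app unique}), the vanishing of the boundary integrals forces the Cauchy data of $(p,q)$ on $\Sigma$ to vanish. (4) Extend $(p,q)$ by zero across $\Sigma$; the vanishing Cauchy data makes the extension a solution of the same parabolic system on a neighbourhood in $Q'$, and by parabolic unique continuation (Carleman estimates as in Lemmas \ref{Carleman 1}--\ref{Carleman 2}) we get $(p,q)\equiv 0$ in $Q$. (5) Returning to the defining identity for $(p,q)$, this yields $\langle\varphi,m\rangle_{L^2(Q)}=0$ for the original $H_+(Q)$-solution $m$ as well, the desired contradiction.

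The main obstacle I anticipate is step (3)--(4): carefully justifying that the boundary pairing genuinely separates the Cauchy data $(p, \partial_\nu p, q, \partial_\nu q)|_\Sigma$, given that the coupling in the system ties $u$ and $m$ together and the terminal condition $u(x,T)=\frac{\delta G}{\delta m}(x,1)(m(x,T))$ introduces a nonlocal-in-$x$ relation at $t=T$. One must check that the $G\in\mathcal{B}$ action-operator hypothesis makes the corresponding boundary/terminal term in the integration by parts well-defined in the low-regularity $H_\pm$ setting, and that the trace theory for $H_\pm(Q)$ discussed after \eqref{eq:fsh} is adequate for all the pairings. A secondary technical point is ensuring the smooth solutions built from Lemma \ref{linear app unique} on $Q'$ really do furnish enough freedom in their $\Sigma$-Cauchy data; this should follow by choosing the data $g,\tilde g,h,\tilde h$ supported in $\Omega'\setminus\Omega$ and invoking interior-to-boundary propagation, but it needs to be spelled out.
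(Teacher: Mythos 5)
Your proposal goes down the classical Hahn--Banach/duality route for Runge approximation, but this is not what the paper does, and as sketched it has real gaps. The paper's proof is far more elementary: a solution of \eqref{Runge 1} is determined by its initial trace $\widetilde m = m(\cdot,0)$, which for $m \in H_{+}(Q)$ lies in $H^{-1}(\Omega)$ by the trace theory recalled after \eqref{eq:fsh}. One approximates $\widetilde m$ by a smooth $\widetilde M \in C^{2+\alpha}(\Omega)$ in the $H^{-1}(\Omega)$ norm (density), solves \eqref{Runge 1} with initial data $\widetilde M$ to get a H\"older-regular solution $(\hat u,\hat m)$ via Lemma \ref{linear app unique}, and then applies the a-priori estimate of Lemma \ref{prior estimate} to the difference system, which gives $\|m-\hat m\|_{L^2(Q)} \leq C\|\widetilde m - \widetilde M\|_{H^{-1}(\Omega)} \leq C\eta$. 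No adjoint system, no unique continuation, no Carleman-based propagation across $\Sigma$ is needed; the whole content of the lemma is ``density of smooth initial data plus continuity of the data-to-solution map.''

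Beyond being a different route, your sketch contains a concrete misreading in steps (2)--(3): every solution of \eqref{Runge 1} satisfies the homogeneous Dirichlet conditions $u = m = 0$ on $\Sigma$, so there is no freedom whatsoever in the lateral Cauchy data of the test solutions, and the free region $Q'\setminus Q$ plays no role in this lemma (it is used elsewhere, in the linearization step, to generate arbitrary initial data $f_1$ on $\Omega$). The only parameter you can vary is the initial slice $m(\cdot,0)$, so the boundary pairing in your integration by parts cannot ``separate'' the Cauchy data of the adjoint pair $(p,q)$ on $\Sigma$; at best it controls a trace of $q$ at $t=0$, after which you would need backward uniqueness / unique continuation for the coupled forward-backward system --- a result that is neither proved in the paper nor supplied by Lemmas \ref{Carleman 1}--\ref{Carleman 2}, which are global weighted identities rather than local Carleman estimates with pseudoconvex weights. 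So while a duality argument could in principle be salvaged, it would demand substantially more machinery than the statement requires, and the decisive, simple observation --- that Lemma \ref{prior estimate} already bounds $\|m\|_{L^2(Q)}$ by $\|m(\cdot,0)\|_{H^{-1}(\Omega)}$ --- is missing from your proposal.
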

\begin{proof}
	Let $ (u,m)\in  \big(L^2(0,T; H_0^1(\Omega))\cap H^1(0,T;H^{-1}(\Omega)\big)\times H_{+}(Q)$ be a solution to the system $\eqref{Runge 1}.$ Let $\widetilde{m}(x)= m(x,0)$. Then by  Theorem $\ref{construct CGO 1}$, we have $ \widetilde{m}(x)\in H^{-1}(\Omega).$ 
	
	Since $C^{2+\alpha}(\Omega) $ is dense in $H^{-1}(\Omega)$,  for any $\eta>0$, there exists $\widetilde{M}\in C^{2+\alpha}(\Omega)$ such that
	$ \|\widetilde{m}-\widetilde{M} \|_{H^{-1}(\Omega)}\leq\eta.$ Then by Lemma $\ref{linear app unique}$, there is a solution $(\hat{u},\hat{m})\in\Big[ C^{1+\frac{\alpha}{2},2+\alpha}(\overline{Q})\Big]^2$ to the PDE system: 
	\begin{equation}\label{Runge 2}
		\begin{cases}
			-\hat{u}_t-\Delta \hat{u}= F(x)\hat{m} & \text{ in } Q,\medskip\\
			\hat{m}_t-\Delta\hat{m}-\Delta\hat{u}=0  & \text{ in } Q, \medskip\\
			\hat{u}(x,t)=\hat{m}(x,t)=0     & \text{ on } \Sigma, \medskip\\
			\displaystyle{\hat{u}(x,T)=\frac{\delta G}{\delta m}(x,1) (\hat{m}(x,T) )}   & \text{ in } \Omega,\medskip\\
			\hat{m}(x,0)=\widetilde{M}  & \text{ in } \Omega.\\
		\end{cases}
	\end{equation}
	Then we have 
	\begin{equation}\label{Runge 3}
		\begin{cases}
			-(u-\hat{u})_t-\Delta( u-\hat{u})= F(x)(m-\hat{m}) & \text{ in } Q, \medskip\\
			(m-\hat{m})_t-\Delta(m-\hat{m})-\Delta( u-\hat{u})=0  & \text{ in } Q, \medskip\\
			(u-\hat{u})(x,t)=(m-\hat{m})(x,t)=0     & \text{ on } \Sigma, \medskip\\
			\displaystyle{(u-\hat{u})(x,T)=\frac{\delta G}{\delta m}(x,1) ((m-\hat{m})(x,T)) }    & \text{ in } \Omega,\medskip\\
			(m-\hat{m})(x,0)=\widetilde{m}-\widetilde{M}  & \text{ in } \Omega.\\
		\end{cases}
	\end{equation}
	Hence, by Lemma $\ref{prior estimate}$,
	$$ \|m-\hat{m}\|_{L^2(Q)}\leq C \|\widetilde{m}-\widetilde{M} \|_{H^{-1}(\Omega)}\leq C \eta,$$ 
which readily completes the proof. 
\end{proof}

\section{Proof of Theorem ~\ref{der F}}
Before the main proof, we present a key observation as a lemma first. 
\begin{lem}\label{key}
	Let $(v,\rho)$ be a solution of the following system:
	\begin{equation}
		\begin{cases}
			v_t-\Delta v= F(x)\rho & \text{ in } Q,\medskip\\
			-\rho_t-\Delta \rho-\Delta v=0  & \text{ in } Q, \medskip\\
			v(x,t)=\rho(x,t)=0      & \text{ on } \Sigma, \medskip\\
			v(x,0)=  0& \text{ in } \Omega.
		\end{cases}
	\end{equation}
	Let $(\overline{u},\overline{m})$ satisfy
	\begin{equation}
		\begin{cases}
			-\overline{u}_t-\Delta \overline{u}- F_1(x)\overline{m}=(F_1-F_2)m_2  & \text{ in } Q, \medskip\\
			\overline{m}_t-\Delta \overline{m}-\Delta \overline{u}=0  & \text{ in } Q, \medskip\\
			\overline{u}(x,t)=	\overline{m}(x,t)=0    & \text{ on } \Sigma, \medskip\\
			\displaystyle{\overline{u}(x,T)=\frac{\delta G}{\delta m}(x,1)(\overline{m}(x,T))}	   &\text{ in } \Omega, \medskip\\
			\overline{u}(x,0)=0    & \text{ in } \Omega, \medskip\\
			\overline{m}(x,0)=\overline{m}(x,T)=0& \text{ in } \Omega\\
		\end{cases}
	\end{equation}
	Then we have 
	\begin{equation}\label{implies F_1=F_2}
		\int_Q \quad (F_1-F_2)m_2\rho \ dxdt=0.
	\end{equation}
\end{lem}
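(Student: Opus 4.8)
The plan is to obtain \eqref{implies F_1=F_2} as an integration-by-parts (duality) identity between the two systems in the statement. Concretely, I would multiply the $\bar u$-equation by $\rho$, multiply the $\bar m$-equation by $v$, add the two, and integrate over $Q$. On the right-hand side the source term of the $\bar u$-equation produces exactly $\int_Q(F_1-F_2)m_2\,\rho\,dx\,dt$, while the $\bar m$-equation contributes $0$; so the whole task reduces to showing that, after moving all derivatives around, the left-hand side vanishes, leaving $0=\int_Q(F_1-F_2)m_2\,\rho\,dx\,dt$.

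First I would integrate by parts in both $x$ and $t$, transferring the operators $\partial_t$ and $\Delta$ between $(\bar u,\bar m)$ and $(v,\rho)$. The lateral boundary terms on $\Sigma$ disappear because $\bar u=\bar m=0$ on $\Sigma$ together with $v=\rho=0$ on $\Sigma$, so that no uncontrolled trace such as $\partial_\nu\bar u$ or $\partial_\nu\bar m$ survives. The temporal boundary terms at $t=0$ disappear using $\bar u(x,0)=0$, $\bar m(x,0)=0$ and $v(x,0)=0$; those at $t=T$ disappear using $\bar m(x,T)=0$ and the terminal relation $\bar u(x,T)=\frac{\delta G}{\delta m}(x,1)(\bar m(x,T))$, which is $0$ because $G\in\mathcal B$ makes $\frac{\delta G}{\delta m}(x,1)(\cdot)$ a bounded \emph{linear} map on terminal traces and therefore sends $\bar m(x,T)=0$ to $0$. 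Thus the over-determined Cauchy-type data carried by $(\bar u,\bar m)$ --- which, in the proof of Theorem~\ref{der F}, is exactly what the assumption $\mathcal N_{F_1}(m_0)=\mathcal N_{F_2}(m_0)$ provides --- is precisely what is needed to annihilate every boundary contribution of the pairing.

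What is then left are the interior (bulk) terms, and here I would use the two PDEs for $(v,\rho)$: this pair solves the system that plays the role of the adjoint to the linearised $F_1$-system governing $(\bar u,\bar m)$, so after substituting $v_t-\Delta v=F_1\rho$ and $-\rho_t-\Delta\rho=\Delta v$ (together with $\bar m_t-\Delta\bar m=\Delta\bar u$ from the $\bar m$-equation) the surviving first-order terms and the mixed Dirichlet-type gradient terms should cancel pairwise. This cancellation is the delicate point: it hinges on the precise way the coupling terms ($F_1$ versus $\Delta$) are distributed across the two systems, and one must also make sure every pairing in the computation is legitimate in the low-regularity class $H_{\pm}(Q)$ in which a CGO solution $(v,\rho)$ lives --- this is where the action-operator hypothesis on $G$ and the density of $C^\infty(Q)$ in $H_{\pm}(Q)$ (hence Green's formula there) are invoked. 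I expect this bookkeeping of the integration by parts, rather than any single estimate, to be the main obstacle; once it is done the pairing collapses to $0=\int_Q(F_1-F_2)m_2\,\rho\,dx\,dt$, which is \eqref{implies F_1=F_2}.
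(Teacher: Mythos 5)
Your overall strategy --- a duality/integration-by-parts argument between the $(\overline u,\overline m)$ system and the adjoint pair $(v,\rho)$, with every boundary contribution killed by the over-determined data --- is exactly the paper's, and your handling of the boundary terms (in particular that $\overline u(x,T)=\frac{\delta G}{\delta m}(x,1)(\overline m(x,T))=0$ by linearity once $\overline m(x,T)=0$) is correct. However, there is a genuine gap in the bulk computation: the two pairings you propose do not close on their own. Set
\begin{equation*}
A=\int_Q\rho\,\Delta\overline u,\quad B=\int_Q\overline m\,\Delta v,\quad C=\int_Q F_1\rho\,\overline m,\quad D=\int_Q\overline u\,\Delta v,\quad X=\int_Q(F_1-F_2)m_2\,\rho .
\end{equation*}
Pairing the $\overline u$-equation with $\rho$ (using $\rho_t=-\Delta\rho-\Delta v$ and Green's formula) gives $X=-2A-C-D$, while pairing the $\overline m$-equation with $v$ (using $v_t=\Delta v+F_1\rho$) gives $0=-2B-C-D$. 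Subtracting, you are left with $X=2(B-A)=2\int_Q(\overline m\,\Delta v-\rho\,\Delta\overline u)\,dxdt$. The surviving terms do \emph{not} cancel pairwise, and these two relations alone cannot determine $X$: you have two equations in the four bulk quantities $A,B,C,D$.

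The missing ingredient is a third pairing: multiply the $\overline m$-equation by $\rho$ (not $v$), integrate by parts in $t$ using $\overline m(x,0)=\overline m(x,T)=0$, and substitute $-\rho_t=\Delta\rho+\Delta v$; the $\Delta\rho$ terms cancel after Green's formula and you obtain $\int_Q(\overline m\,\Delta v-\rho\,\Delta\overline u)\,dxdt=0$, i.e. $A=B$. This is precisely identity \eqref{IBP1} in the paper, and it is exactly what annihilates the residual above. With this third identity added, your scheme becomes equivalent to the paper's proof, which uses these same three pairings. The functional-analytic caveats you raise (Green's formula in $H_\pm(Q)$, density of smooth functions, the action-operator property of $G$) are appropriate but secondary to this structural point.
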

\begin{proof}
	Notice that
	\begin{equation}\label{IBP1}
		\begin{aligned}
			0&=\int_Q (\overline{m}_t-\Delta \overline{m}-\Delta \overline{u} )\rho\ dxdt\\
			&=\int_{\Omega}\overline{m}\rho\Big|_0^T dx-\int_Q\overline{m}\rho_t dxdt-\int_Q\rho(\Delta\overline{m}+\Delta\overline{u})\ dxdt\\
			&=-\int_Q\overline{m}( -\Delta\rho-\Delta v)dxdt-\int_Q\rho( \Delta\overline{m}+\Delta\overline{u})\ dxdt\\
			&=\int_Q (\overline{m}\Delta v-\rho\Delta\overline{u})\ dxdt.
		\end{aligned}
	\end{equation}
	
	Similarly, one can deduce that
	\begin{equation}\label{IBP2}
		\begin{aligned}
			0&=\int_Q (\overline{m}_t-\Delta \overline{m}-\Delta \overline{u} )v\ dxdt\\
			&=-\int_Q\overline{m}(\Delta v+F_1\rho)dxdt-\int_Q v(\Delta\overline{m}+\Delta\overline{u} )\ dxdt\\
			&=-\int_Q(2\overline{m}\Delta v+F_1\rho\overline{m}+\overline{u} \Delta v)\ dxdt.
		\end{aligned}
	\end{equation}
	Then we have 
	\begin{align*}
		\int_Q (F_1-F_2)m_2\rho dxdt&=\int_Q\quad \rho(-\overline{u}_t-\Delta\overline{u}-F_1\overline{m})\ dxdt\\
		&=\int_{\Omega}\overline{u}\rho\Big|_0^T dx+\int_Q\quad (\rho_t\overline{u}-\rho\Delta\overline{u}-F_1\rho\overline{m})\  dxdt\\
		&=\int_{\Omega}  \overline{u}(x,T)\rho(x,T) dx+ \int_Q\quad (-\Delta\rho-\Delta v)-\rho\Delta\overline{u}-F_1\rho\overline{m}\ dxdt\\
		&=\int_{\Omega} \frac{\delta G}{\delta m}(x,1)(\overline{m}(x,T))\rho(x,T) dx+ \int_Q\quad (-2\rho\Delta\overline{u}-\overline{u}\Delta v-F_1\rho\overline{m})\ dxdt\\
		&=\int_Q\quad (-2\rho\Delta\overline{u}-\overline{u}\Delta v-F_1\rho\overline{m})\ dxdt, 
	\end{align*}
which in combination with $\eqref{IBP1}$ and $\eqref{IBP2}$ readily yields that 
	$$	\int_Q \quad (F_1-F_2)m_2\rho \ dxdt=0.$$
	
	The proof is complete. 
\end{proof}

	
	With all the preparations, we are in a position to present the proof of Theorem~\ref{der F}. 
	
	\begin{proof}[ Proof of Theorem $\ref{der F}$ ]
			For $j=1,2$, let us consider 
		\begin{equation}\label{MFG 1,2}
			\begin{cases}
				-u_t-\Delta u+\frac{1}{2}|\nabla  u|^2= F_j(x,m) & \text{ in } Q',\medskip\\
				m_t-\Delta m-\div (m\nabla u)=0  & \text{ in } Q', \medskip\\
				\p_{\nu}u(x,t)=\p_{\nu}m(x,t)=0     & \text{ on } \Sigma', \medskip\\
				u(x,T)=G(x,m(x,T))     & \text{ in } \Omega',\medskip\\
				m(x,0)=m_0(x) & \text{ in } \Omega'.\\
			\end{cases}
		\end{equation}
		Next, we divide our proof into three steps. 
		
		\bigskip
	\noindent {\bf Step I.}~First, we do the first order linearization to the MFG system \eqref{MFG 1,2} in $Q$ and can derive: 
		\begin{equation}\label{linearization}
			\begin{cases}
				-\p_{t}u^{(1)}_j-\Delta u_j^{(1)}= F_j^{(1)}(x)m_j^{(1)} & \text{ in } Q, \medskip\\
				\p_{t}m^{(1)} _j-\Delta m_j^{(1)} -\Delta u_j^{(1)}=0  & \text{ in } Q, \medskip\\
				\displaystyle{u^{(1)}_j(x,T)=\frac{\delta G}{\delta m}(x,1)(m^{(1)}(x,T))}     & \text{ in } \Omega,\medskip\\
				m^{(1)} _j(x,0)=f_1(x) & \text{ in } \Omega.\\
			\end{cases}
		\end{equation}
		Let $\overline{u}^{(1)}=u^{(1)}_1-u^{(1)}_2$ and $ \overline{m}^{(1)}=m^{(1)} _1-m^{(1)} _2. $ Let 
		$(v,\rho)$ be a solution to the following system
		\begin{equation}\label{adjoint}
			\begin{cases}
				v_t-\Delta v= F^{(1)}_1(x)\rho & \text{ in } Q, \medskip\\
				-\rho_t-\Delta \rho-\Delta v=0  & \text{ in } Q, \medskip\\
				v(x,t)=\rho(x,t)=0      & \text{ on } \Sigma, \medskip\\
				v(x,0)=  0& \text{ in } \Omega.
			\end{cases}
		\end{equation}
	Since $\mathcal{N}_{F_1}=\mathcal{N}_{F_2}$, 
		 by Lemma $\ref{key}$, we have 
		\begin{equation}\label{implies to 0;1}
			\int_Q \quad( F^{(1)}_1(x)-F^{(1)}_2(x))m^{(1)} _2\rho \ dxdt =0,
		\end{equation}
		for all $ m^{(1)} _2\in C^{1+\frac{\alpha}{2},2+\alpha}(Q)$ with $m^{(1)} _2 $ being a solution 
		to $\eqref{linearization}.$
		 By Theorems $\ref{construct CGO 1}$ and $\ref{CGO2}$, there exist $$ m^{(1)} _2(x,t;\lambda)\in H_{+}(Q), \rho(x,t;\lambda)\in H_{-}(Q)$$ in the form
		$$m^{(1)} _2=  e^{-\lambda^2t-\lambda\mathrm{i}\xi_1\cdot x }(\theta_{+}e^{-\mathrm{i}(x,t)(\eta_1,\tau_1)}+ w_{+}(x,t;\lambda) ),$$
		$$\rho=e^{\lambda^2t+\lambda\mathrm{i}\xi_2\cdot x}(\theta_{-}e^{   -\mathrm{i}(x,t)(\eta_2,\tau_2)     }+ w_{-}(x,t;\lambda) ).$$
		Furthermore, we have 
		$$\lim\limits_{\lambda\to\infty}\|w_{\pm}(x,t;\lambda)\|=0.$$

		Now, by Lemma $\ref{Runge approximation}$, there exist a sequence of solutions $\hat{m}_k\in C^{1+\frac{\alpha}{2},2+\alpha}(Q)$ to $\eqref{linearization}$ and 
		$\hat{\rho}_k\in C^{1+\frac{\alpha}{2},2+\alpha}(Q)$ to $\eqref{adjoint}$ such that
		$$\lim\limits_{k\to\infty}\|\hat{m}_k-m^{(1)} _2(x,t;\lambda) \|_{L^2(Q)}=0,$$
		$$\lim\limits_{k\to\infty}\|\hat{\rho}_k-\rho(x,t;\lambda) \|_{L^2(Q)}=0.$$
		Therefore, $\eqref{implies to 0;1}$ implies that
		\begin{equation}\label{implies to 0;2}
			\int_Q \quad( F^{(1)}_1(x)-F^{(1)}_2(x)\hat{m}_k\rho \ dxdt =0,
		\end{equation}
		for all $k\in\mathbb{N}.$
		Let $k,\lambda\to\infty$ in $\eqref{implies to 0;2}$, we have
		\begin{equation}\label{implies to 0;3}
			\int_Q \quad( F^{(1)}_1(x)-F^{(1)}_2(x))e^{-\mathrm{i}(\xi_1+\xi_2,\tau_1+\tau_2)\cdot (x,t)} \ dxdt =0,
		\end{equation}
		for all $\xi_1,\xi_2\in\mathbb{S}^{n-1}$ and $\tau_1,\tau_2\in\mathbb{R}.$ Hence, the Fourier transform of $ ( F^{(1)}_1(x)-F^{(1)}_2(x))$ vanishes in an open set of $\mathbb{R}^n$. Therefore, we have 
		$$ F^{(1)}_1(x)=F^{(1)}_2(x):= F^{(1)}(x)$$
	    in $\Omega.$	
		
		\bigskip
		
		\noindent{\bf Step II.}~We proceed to consider the second linearization to the MFG system $\eqref{MFG 1,2}$ in $Q$ and can obtain for $j=1,2$:
			\begin{equation}
			\begin{cases}
				-\p_tu_j^{(1,2)}-\Delta u_j^{(1,2)}(x,t)+\nabla u_j^{(1)}\cdot \nabla u_j^{(2)}\medskip\\
				\hspace*{3cm}= F^{(1)}(x,t)m_j^{(1,2)}+F^{(2)}(x,t)m_j^{(1)}m_j^{(2)} & \text{ in } \Omega\times(0,T),\medskip\\
				\p_t m_j^{(1,2)}-\Delta m_j^{(1,2)}-\Delta u_j^{(1,2)}= {\rm div} (m_j^{(1)}\nabla u_j^{(2)})+{\rm div}(m_j^{(2)}\nabla u_j^{(1)}) ,&\text{ in } \Omega\times (0,T) \medskip\\
				\displaystyle{u_j^{(1,2)}(x,T)=\frac{\delta G}{\delta m}(x,1)(m_j^{(1,2)}(x,T))+\frac{\delta^2 G}{\delta m^2}(x,1)(m_j^{(1)}m_j^{(2)}(x,T))} & \text{ in } \Omega,\medskip\\
				m_j^{(1,2)}(x,0)=0 & \text{ in } \Omega.\\
			\end{cases}  	
		\end{equation}
	By the proof in Step~I, we have $ (u_1^{(1)},m_1^{(1)})=( u_2^{(1)},m_2^{(1)})$.

	Define $\overline{u}^{(1,2)}=u_1^{(1,2)}-u_2^{(1,2)} $ and $\overline{m}^{(1,2)}=m_1^{(1,2)}-m_2^{(1,2)} $. Since  
	$\mathcal{N}_{F_1}=\mathcal{N}_{F_2}$,  we have
		\begin{equation}
		\begin{cases}
			-\overline{u}^{(1,2)}_t-\Delta \overline{u}^{(1,2)}- F_1(x)\overline{m}^{(1,2)}=(F^{(2)}_1-F^{(2)}_2)m_1^{(1)}m_2^{(1)}   & \text{ in } Q\\
			\overline{m}^{(1,2)}_t-\Delta \overline{m}^{(1,2)}-\Delta \overline{u}^{(1,2)}=0  & \text{ in } Q, \medskip\\
			\overline{u}^{(1,2)}(x,t)=	\overline{m}^{(1,2)}(x,t)=0    & \text{ on } \Sigma, \medskip\\
			\displaystyle{\overline{u}^{(1,2)}(x,T)=\frac{\delta G}{\delta m}(x,1)(\overline{m}^{(1,2)}(x,T))}	   &\text{ in } \Omega, \medskip\\
			\overline{u}^{(1,2)}(x,0)=0    & \text{ in } \Omega, \medskip\\
			\overline{m}^{(1,2)}(x,0)=\overline{m}^{(1,2)}(x,T)=0& \text{ in } \Omega.\\
		\end{cases}
	\end{equation}
Let 
$(v,\rho)$ be a solution to the following system
\begin{equation}
	\begin{cases}
		v_t-\Delta v= F^{(1)}_1(x)\rho & \text{ in } Q, \medskip\\
		-\rho_t-\Delta \rho-\Delta v=0  & \text{ in } Q, \medskip\\
		v(x,t)=\rho(x,t)=0      & \text{ on } \Sigma, \medskip\\
		v(x,0)=  0& \text{ in } \Omega.
	\end{cases}
\end{equation}
By Lemma $\ref{key}$, we have
\begin{equation}
		\int_Q \quad( F^{(2)}_1(x)-F^{(2)}_2(x))m^{(1)} _2m_2^{(1)} \rho \ dxdt =0.
\end{equation}
Next, by a similar argument in the proof of Step~I, we can derive that $( F^{(2)}_1(x)-F^{(2)}_2(x))m^{(1)} _2=0$ for all $m^{(1)} _2 $ as long as it is a solution to $\eqref{linearization}$. We choose $m^{(1)} _2(x,0)\in C^{2+\alpha}(\Omega)$ and it is positive in $\Omega $. Since $m$ satisfies
	\begin{equation}
	\begin{cases}
		\p_{t}m^{(1)} _2-\Delta m_2^{(1)} -\Delta u_2^{(1)}=0  & \text{ in } Q,\medskip\\
		m^{(1)} _2(x,t)=0 & \text{ on } \Sigma, \medskip\\
		m^{(1)} _2(x,0)>0& \text{ in } \Omega,\\
	\end{cases}
\end{equation}
and $u\in C^{1+\frac{\alpha}{2},2+\alpha}(Q)$, we have $ m_2^{(1)}$ cannot be zero in any open set of $Q$. Therefore, we have 
$$ F^{(2)}_1(x)-F^{(2)}_2(x)=0. $$

\bigskip

\noindent{Step~III.}~Finally, by mathematical induction and repeating similar arguments as those in the proofs of Steps I and II, one can show that
$$F^{(k)}_1(x)-F^{(k)}_2(x)=0 ,$$
for all $k\in\mathbb{N}$. Hence, $F_1(x)=F_2(x).$

The proof is complete. 

	\end{proof}

	\section*{Acknowledgements}

The work of was supported by the Hong Kong RGC General Research Funds (projects 12302919, 12301420 and 11300821),  the NSFC/RGC Joint Research Fund (project N\_CityU 101/21), and the France-Hong Kong ANR/RGC Joint Research Grant, A-CityU203/19.

\end{document}